\title{Spinorial Representations of Orthogonal Groups}
\author{Jyotirmoy Ganguly}
\author{Rohit Joshi}
\newtheorem{theorem}{Theorem}
\newtheorem{corollary}{Corollary}
\newtheorem{lemma}{Lemma}
\newtheorem{prop}{Proposition}
\theoremstyle{definition}
\newtheorem{remark}{Remark}
\newcommand{\C}{\mathbb C}
\newcommand{\ztwo}{\mathbf Z/2\mathbf Z}
\newcommand{\nc}{\newcommand}
\nc{\thm}{\theorem}
\nc{\cor}{\corollary}
\nc{\mc}{\mathcal}
\nc{\mb}{\mathbb}
\nc{\mf}{\mathfrak}
\nc{\ul}{\underline}
\nc{\ol}{\overline}
\nc{\N}{\mb N}
\nc{\R}{\mb R}
\nc{\Z}{\mb Z}
\nc{\Q}{\mb Q}
\nc{\gm}{\gamma}
\nc{\dnu}{\frac{\partial}{\partial \nu}}
\nc{\Dnu}[1]{\frac{\partial^{#1}}{\partial \nu^{#1}}}
\nc{\dmo}{\DeclareMathOperator}
\dmo{\Ker}{Ker} \dmo{\val}{val} \dmo{\ord}{ord}
 \dmo{\pr}{pr}
\dmo{\I}{I}
\dmo{\II}{II}
\dmo{\odd}{odd}
\dmo{\sgn}{sgn}
\dmo{\id}{id}
\nc{\beq}{\begin{equation*}}
\nc{\eeq}{\end{equation*}}
\nc{\half}{\frac{1}{2}}
\dmo{\Mod}{mod}
\dmo{\dyn}{dyn}
\dmo{\simp}{sc}
\dmo{\im}{im}
\dmo{\Ad}{Ad}
\dmo{\Gal}{Gal}
\dmo{\Aut}{Aut}
\dmo{\Pin}{Pin}
\dmo{\core}{core}
\dmo{\res}{res}
\dmo{\lin}{lin}
 \dmo{\Spin}{Spin}
\dmo{\Sp}{Sp}
\dmo{\SL}{SL}
\dmo{\GL}{GL}
\dmo{\SO}{SO}
\dmo{\PGL}{PGL}
\dmo{\GSp}{GSp}
\dmo{\sd}{sd}
\dmo{\orth}{orth}
\dmo{\Span}{Span}
\dmo{\SU}{SU}
\nc{\la}{\lambda}
 \nc{\lip}{\langle}
 \nc{\rip}{\rangle}
\nc{\hpi}{\widehat{\pi}}
\dmo{\Perm}{Perm}
\dmo{\Res}{Res}
\dmo{\Ind}{Ind}
\dmo{\tr}{tr}
\dmo{\Sym}{Sym}
\dmo{\reg}{reg}
\dmo{\ch}{ch}
\dmo{\Hom}{Hom}
\dmo{\diag}{diag}
\dmo{\Or}{O}
\nc{\eps}{\varepsilon}
\date{\today}
\address{The Institute of Mathematical Sciences, Chennai-600113, India} \email{jyotirmoy.math@gmail.com}  
\address{BHASKARACHARYA PRATISHTHANA, Pune-411004, India} \email{rohitsj@students.iiserpune.ac.in}  
\begin{document}
\maketitle
\begin{abstract}
Let $G$ be a real compact Lie group, such that  $G=G^0\rtimes C_2$, with $G^0$ simple. Here $G^0$ is the connected component of $G$ containing the identity and $C_2$ is the cyclic group of order $2$. We give a criterion whether an orthogonal representation $\pi:G\to \Or(V)$ lifts to $\Pin(V)$ in terms of the highest weights of $\pi$. We also calculate the first and second Stiefel-Whitney classes of the representations of the Orthogonal groups.
\end{abstract}

\tableofcontents

\section{Introduction}\label{intro}

Let $G$ be a real compact Lie group such that $G = G^0 \rtimes C_2$, where $G^0$ is its connected component containing the identity and $C_2$ denotes the cyclic group of order $2$. We take $C_2=\{1,g_0\}$, where conjugation action of $g_0$ on $G^0$ gives a diagram automorphism of $G^0$. 
We in particular consider the groups $G$ with $G^{0}$ simple of type $A_n, D_n$ and $E_6$. These are of interest because other types do not admit a nontrivial diagram automorphism.

We call a real (resp. complex) representation $(\pi,V)$ of $G$ orthogonal if its image lies inside $\Or(V)$, the real (resp. complex) orthogonal group. We know that $\Pin(V)$ is a topological double cover of $\Or(V)$. Let $\rho:\Pin(V)\to \Or(V)$ denote the covering map. An orthogonal representation $\pi$ of $G$ is spinorial if there exists a Lie group homomorphism $\widehat{\pi}:G\to \Pin(V)$ such that the following diagram commutes:
\begin{center}
	\begin{tikzpicture}
	\node (A1) at (0,0) {$G$};
	\node (A2) at (3,0) {$\Or(V)$};
	\node (B1) at (3,3) {$\Pin(V)$};
	
	\draw[->](A1)to node [above]{$\pi$} (A2);
	\draw[->](B1) to node [right]{$\rho$} (A2);
	\draw[densely dotted,->](A1) to node [above]{$\widehat{\pi}$} (B1);
	\end{tikzpicture}
\end{center}
i.e. $\rho\circ\widehat{\pi}=\pi$. We write $\Or(n)$ (resp. $\SO(n)$) for $\Or(n,\R)$ (resp. $\SO(n,\R)$). For real representations we take the quadratic form $$Q(x_1,x_2, \ldots, x_n) = -\sum x_i^2,$$
and consider the corresponding real Pin group.

For any orthogonal complex representation $\pi$ of a compact group $G$ there exists a real representation $\pi_0$ such that $\pi\cong\pi_0\otimes \C$. For details we refer the reader to \cite[Chapter $2$, Section $6$]{BrokerDieck}. Note that the representation $\pi$ is spinorial if and only if $\pi_0$ is spinorial. 

The irreducible representations of $G= G^0 \rtimes C_2$ arise in the following way. Take an irreducible representation $(\pi^{\lambda}, V^{\lambda})$ of $G^0$, parametrized by the highest weight $\lambda$. Denote the highest weight of the representation $ \phi(x) =\pi^{\lambda}(g_0 x g_0^{-1})$ by $g_0 \cdot \lambda$. Consider the representation $\rho^{\lambda}=\mathrm{Ind}_{G^0}^G(\pi^{\lambda})$. 
There are two possibilities.
\begin{enumerate}
	\item [Type I:] The representation $\rho^{\lambda}$ is irreducible. In this case we have $g_0\cdot\lambda\neq \lambda$ and $$\rho^{\lambda}\mid_{G^0} = \pi^{\lambda} \oplus \pi^{g_0 \cdot \lambda}.$$
	\item [Type II:] The representation $\rho^{\lambda}$ is reducible and 
	$$\rho^{\lambda}=\pi^{\lambda,+}\oplus \pi^{\lambda,-},$$
	such that $\dim \pi^{\lambda,+}=\dim \pi^{\lambda,-}=\dim \pi^{\lambda}$.
	In this case we have $g_0\cdot\lambda= \lambda$ and $$ \pi^{\lambda,\pm}\mid_{G^0} = \pi^{\lambda}.$$
\end{enumerate}

In fact every irreducible representation of $G$ is either of Type I or Type II. From \cite{joshi} we obtain a criterion for spinoriality of reductive connected algebraic groups over a field of characteristic zero. The criterion appears as the first condition in Theorem \ref{spin1}. We write $\mf g$ for the Lie algebra of $G$. Let $\mf{g} $ be simple. 
In the following theorem $p(\underline{\nu})$ is a certain constant related to group $G^0$ and $\chi_{\lambda}(C)$ is the trace of the Casimir element for the representation of $G^0$ with highest weight $\lambda$. For details we refer Sections \ref{notpre} and \ref{JS} of this paper. 
\begin{theorem}\label{spin1}
	An orthogonal representation of $G$ of Type I is spinorial if and only if both the following conditions hold: 
	\begin{enumerate}
		\item 
		$p(\underline{\nu})\cdot (\dim V^{\lambda} )\cdot \left( \dfrac{\chi_{\lambda}(C)+ \chi_{g_0 \cdot\lambda}(C)}{\dim \mf{g}}\right)\equiv 0\pmod 2,$ \\
		
		\item 
		$\dim V^{\lambda}\equiv 0\,\,\text{or}\,\,3\pmod4$.
	\end{enumerate}
\end{theorem}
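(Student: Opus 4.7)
The plan is to decompose the lifting question into two independent requirements: that the restriction $\rho^\lambda|_{G^0}\colon G^0 \to \Or(V)$ lifts to $\Spin(V)$, and that such a lift extends to $G$. Because $G^0$ is connected, the image of $\rho^\lambda|_{G^0}$ sits inside $\SO(V)$, so its first Stiefel-Whitney class vanishes and $w_2$ is additive on direct summands. Applying the criterion of \cite{joshi} separately to $\pi^\lambda$ and to $\pi^{g_0\cdot\lambda}$ and summing modulo $2$, together with the equality $\dim V^{g_0\cdot\lambda} = \dim V^\lambda$ (outer automorphisms preserve dimensions), collapses the two expressions into the single condition (1), which is therefore equivalent to $\rho^\lambda|_{G^0} = \pi^\lambda \oplus \pi^{g_0\cdot\lambda}$ lifting to $\Spin(V)$.

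Assume a group homomorphism lift $\widehat{\rho}|_{G^0}\colon G^0 \to \Spin(V)$ exists. Since $G = G^0 \rtimes C_2$ with $g_0^2 = 1$, extending to $G$ amounts to choosing $\widehat{g_0} \in \Pin(V)$ over $\rho^\lambda(g_0)$ such that (a) $\widehat{g_0}^{\,2} = 1$, and (b) $\widehat{g_0}\,\widehat{\rho}|_{G^0}(g)\,\widehat{g_0}^{-1} = \widehat{\rho}|_{G^0}(g_0 g g_0^{-1})$ for all $g \in G^0$. I would then observe that (b) is automatic: the ratio of the two sides lies in $\ker(\Pin(V) \to \Or(V)) = \{\pm 1\}$, depends continuously on $g$, and is $+1$ at $g = 1$, hence is $+1$ everywhere since $G^0$ is connected. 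Both lifts $\pm \widehat{g_0}$ have the same square, so condition (a) depends only on the orthogonal transformation $\rho^\lambda(g_0)$.

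To finish I would compute this square explicitly. Pick an orthonormal basis $e_1,\ldots,e_n$ of $V^\lambda$ and let $f_i = A(e_i)$, where $A\colon V^\lambda \to V^{g_0\cdot\lambda}$ is an intertwiner chosen so that $\rho^\lambda(g_0)$ acts on $V = V^\lambda \oplus V^{g_0\cdot\lambda}$ by the simultaneous swap $e_i \leftrightarrow f_i$. This is the product of $n := \dim V^\lambda$ mutually commuting reflections, one in each of the pairwise orthogonal planes $\Span(e_i,f_i)$, through the unit vector $u_i = (e_i - f_i)/\sqrt{2}$. In the Clifford algebra of $Q(x) = -\sum x_i^2$ the vectors $u_i$ satisfy $u_i^2 = -1$ and $u_i u_j = -u_j u_i$ for $i\neq j$, so $\widehat{g_0} = u_1 \cdots u_n$ up to sign, and
\begin{equation*}
\widehat{g_0}^{\,2} \;=\; (-1)^{n(n-1)/2}\,u_1^2 \cdots u_n^2 \;=\; (-1)^{n(n-1)/2 + n} \;=\; (-1)^{n(n+1)/2},
\end{equation*}
which equals $+1$ precisely when $n \equiv 0$ or $3 \pmod 4$. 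That is exactly condition (2).

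The main obstacle is matching Joshi's criterion to condition (1) with the correct combinatorial factor — one must verify that the two separate criteria for $\pi^\lambda$ and $\pi^{g_0\cdot\lambda}$ sum cleanly into the stated form and that the $w_2$-additivity is legitimate here. Once that assembly is in place, the intertwining check is a short connectedness-plus-continuity argument and the determination of $\widehat{g_0}^{\,2}$ is a routine Clifford algebra computation.
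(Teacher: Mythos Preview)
Your proposal is correct and follows essentially the same route as the paper: reduce via the semidirect product structure $G = G^0 \rtimes C_2$ to separate spinoriality conditions on $G^0$ (handled by \cite{joshi}, using $w_1=0$ on the connected group so that $w_2$ is additive) and on $C_2$ (handled by analyzing the lift of $\rho^\lambda(g_0)$). The paper packages these steps as Theorem~\ref{spinsemi}, Proposition~\ref{eigendim}, and Lemma~\ref{lem1} (citing \cite{jyoti}), whereas you redo the connectedness argument for the intertwining relation and compute $\widehat{g_0}^{\,2}$ directly in the Clifford algebra; the underlying content is identical.
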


\begin{theorem}\label{spin2}
 An orthogonal representation $\pi^{\lambda, \pm}$ of $G$ of Type II is spinorial if and only if both the following conditions hold: 
	\begin{enumerate}
		\item 
		$$\dfrac{ p(\underline{\nu}) \cdot\dim V^{\lambda} \cdot   \chi_{\lambda}(C)}{\dim \mf{g}}\equiv 0\pmod 2.$$ 
		\item 
		$\dim V^{\lambda}-\chi_{\pi^{\lambda,\pm}}(g_0)\equiv 0\,\,\text{or}\,\,6\pmod8$.
	\end{enumerate}
\end{theorem}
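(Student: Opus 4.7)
The plan is to follow the same strategy as for Theorem~\ref{spin1}, exploiting the fact that $\pi^{\lambda,\pm}|_{G^0} = \pi^\lambda$. If $\widehat{\pi^{\lambda,\pm}} : G \to \Pin(V)$ lifts $\pi^{\lambda,\pm}$, then its restriction to $G^0$ lifts $\pi^\lambda$; conversely, any lift $\widehat{\pi^\lambda} : G^0 \to \Pin(V)$ together with a choice of $\tilde{g}_0 \in \rho^{-1}(\pi^{\lambda,\pm}(g_0))$ determines a candidate extension by $\widehat{\pi^{\lambda,\pm}}(h g_0) := \widehat{\pi^\lambda}(h)\,\tilde{g}_0$ for $h \in G^0$.

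Next I would show that this candidate is a group homomorphism precisely when $\tilde{g}_0^{\,2} = 1$. The two defining relations that must be verified are the conjugation identity $\tilde{g}_0\, \widehat{\pi^\lambda}(h)\, \tilde{g}_0^{-1} = \widehat{\pi^\lambda}(g_0 h g_0^{-1})$ for all $h \in G^0$ and the square relation $\tilde{g}_0^{\,2} = 1$. The conjugation identity is automatic: both sides are continuous in $h$ and project to the same element of $\Or(V)$, so they differ by a sign $f(h) \in \{\pm 1\}$; this $f$ is continuous on the connected group $G^0$ with $f(e) = 1$, hence $f \equiv 1$. Thus the only genuine obstruction is $\tilde{g}_0^{\,2} = 1$. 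By the result from \cite{joshi}, existence of $\widehat{\pi^\lambda}$ on $G^0$ is exactly condition~(1), so it remains to reformulate $\tilde{g}_0^{\,2} = 1$ as condition~(2).

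For the last step, note that the two possible lifts $\pm\tilde{g}_0$ have the same square, so $\tilde{g}_0^{\,2}$ depends only on the involution $A := \pi^{\lambda,\pm}(g_0) \in \Or(V^\lambda)$. Diagonalizing $A$ orthogonally gives $a$ eigenvalues equal to $+1$ and $k$ eigenvalues equal to $-1$, with $a + k = \dim V^\lambda$ and $a - k = \chi_{\pi^{\lambda,\pm}}(g_0)$; hence $k = \frac{1}{2}(\dim V^\lambda - \chi_{\pi^{\lambda,\pm}}(g_0))$. Picking orthonormal generators $e_1,\dots,e_k$ of the $(-1)$-eigenspace inside the Clifford algebra (so $e_i^{\,2} = Q(e_i) = -1$ and $e_i e_j = -e_j e_i$ for $i \neq j$), the element $\tilde{g}_0 = e_1 e_2 \cdots e_k$ lifts $A$, and a short induction gives $\tilde{g}_0^{\,2} = (-1)^{k(k+1)/2}$. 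This equals $1$ exactly when $k \equiv 0$ or $3 \pmod 4$, equivalently when $\dim V^\lambda - \chi_{\pi^{\lambda,\pm}}(g_0) \equiv 0$ or $6 \pmod 8$, which is condition~(2).

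The main obstacle is to execute the Clifford-algebra computation of $\tilde{g}_0^{\,2}$ consistently with the paper's sign conventions ($Q = -\sum x_i^2$), so that the arithmetic in $k$ lands on the correct residues modulo $8$ (a different convention would flip $\{0,6\}$ to $\{0,2\}$). The remaining steps, in particular the connectedness argument for the conjugation identity and the invocation of \cite{joshi} for condition~(1), are essentially organizational.
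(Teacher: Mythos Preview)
Your proposal is correct and follows essentially the same route as the paper. The paper's proof invokes Theorem~\ref{spinsemi} (spinoriality of $G^0\rtimes C_2$ reduces to spinoriality of the two restrictions), then \cite[Theorem~5]{joshi} for condition~(1) and Lemma~\ref{lem1} together with Proposition~\ref{eigendim} for condition~(2). Your argument unpacks these lemmas in place: the connectedness argument for the conjugation identity is exactly the engine behind Lemma~\ref{prod}/Theorem~\ref{spinsemi}, and your Clifford computation $\tilde g_0^{\,2}=(-1)^{k(k+1)/2}$ with $k=\tfrac12(\dim V^\lambda-\chi_{\pi^{\lambda,\pm}}(g_0))$ is precisely what underlies Lemma~\ref{lem1} and Proposition~\ref{eigendim}. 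So the only difference is packaging: the paper is modular and cites earlier results, while you redo those results inline; both yield the same two conditions by the same mechanism.
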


We also provide a formula for $\chi_{\pi^{\lambda,\pm}}(g_0)$ in terms of the highest weight $\lambda$ of the representation of $G^0$ (see Theorem \ref{chi}).



.

An orthogonal representation $\pi$ of  $G$ is spinorial if and only if $w_2(\pi) + w_1(\pi)\cup w_1(\pi)=0 ,$ where $w_1 $and $w_2$ are first and second Stiefel-Whitney classes of $\pi$.  
Let $m$ denote the multiplicity of $-1$ as an eigenvalue of $\pi^{\lambda,\pm}(g_0)$. Then for representations of Type II we have $m=(\dim V^{\lambda}-\chi_{\pi}(g_0))/2$.  
We use these results to compute the second Stiefel-Whitney classes for representations $\pi=\pi^{\lambda, \pm}$ of $\Or(n)$ for $n\geq 4$:
%
$$
w_2(\pi_0)=\dfrac{2(n-1)}{n(2n-1)}\cdot \dim V^{\lambda}(\lambda,\lambda+2\delta)w_2(\gamma^n)+\dfrac{m(m-1)}{2}e_{\mathrm{cup}},$$
where $\mathrm{e_{cup}}$ is a certain cup product and $\gamma_n$ is the $n$-plane vector bundle over the infinite Grassmannian $G_n$. For details see Section \ref{sw}. Note that $G_n= BO(n)$ where $BO(n)$ denote the classifying space for $\Or(n)$. 
We also calculate $w_2(\pi_0)$ for the cases when $\pi=\rho^{\lambda}$ is an irreducible representation of $\Or(2n)$ and irreducible representations $\pi$ of $\Or(2n+1)$.

We also have a character formula to detect spinoriality of representations of orthogonal groups.
A representation $\pi$ of $\Or(n)$ is spinorial if and only if both of the following conditions hold:
	\begin{enumerate}
	\item $\chi_{ \pi}(I) -\chi_{ \pi}(d_1) \equiv 0 \text{ or } 6 \pmod 8$,
	\item $\chi_{ \pi}(I) -\chi_{ \pi}(d_2) \equiv 0 \text{ or } 6 \pmod 8$.
\end{enumerate} Here $\chi_{\pi}$ is the character of $\pi$ and  $d_1 = \diag(-1,1,1 \ldots, 1) $ and $d_2 = \diag(-1,-1,1,\ldots ,1)$, where $\diag(a_1, \ldots,a_n)$ is the diagonal matrix with entries $a_i$. In fact the same formulae detect the spinoriality of orthogonal representations of $\GL(n, \R)$

The paper is arranged as follows. Section \ref{notpre} reviews the basic definitions and notations. We give a criterion for spinoriality of semidirect product and establish a connection between spinoriality of a Lie group and its maximal compact subgroup in Section \ref{semidirect}. We present brief reviews of the papers \cite{joshi} and \cite{wendt} highlighting the important results in Sections \ref{JS} and \ref{wendt} respectively. In Section \ref{main} we give criteria for spinoriality of an orthogonal, irreducible representation of compact real Lie groups $G$ of the form $G^{0}\rtimes C_2$ in terms of their highest weights. In Section \ref{red} we solve the case of reducible representations. Section \ref{ortho} deals with the  particular case of Orthogonal groups. We calculate first and second Stiefel-Whitney classes of real representations of Orthogonal groups in Section \ref{sw} . Here we obtain expressions of first and second stiefel whitney class in terms of highest weights of the representations. In section \ref{chfor} we provide criteria to detect spinorial representations of orthogonal groups in terms of character values. Finally in Section \ref{examples} we work out some examples like $\Or(2), \Or(4)$ and $\Or(8)$. 

\bigskip

{\bf Acknowledgements:} The authors would like to thank Dr. Steven Spallone for helpful conversations. The first author of this paper was supported by a post doctoral fellowship from IMSc, Chennai. The second author of this paper was supported by a post doctoral fellowship from Bhaskaracharya Pratishthan, Pune.

\section{Notation and Preliminaries}\label{notpre}

\subsection{Compact Lie Groups}
Let $G$ be a real compact Lie group such that $G= G^0 \rtimes C_2$ such that $G^0$ is simple. Let $T$ be a maximal torus of $G^0$ with Lie algebra $LT$. Let $X^{*}(T)$ (resp. $X_{*}(T)$)
be the character (resp. co-character) lattice of $T$. Consider an irreducible orthogonal representation $(\pi^{\lambda},V^{\lambda})$ of $G^0$, where $\lambda$ denotes the highest weight. 
Let $\mathfrak{g}$ denote the Lie algebra of $G^0$. 
Consider a co-character $\nu\in X_{*}(T)$. 
Note that $\lambda\in X^{*}(T)$. 
We have $\chi_{\lambda}(C)=(\lambda,\lambda+2\delta)$, where $(,)$ is the Killing form for $\mathfrak{g}$ and $\delta$ is half the sum of positive roots for $\mathfrak{g}$, and $C$ is the Casimir element for $\mathfrak{g}$. Let $\pi_1(G)=X_{*}(T)/Q(T)$, where $Q(T)$ is the co-root lattice.
  (In Section \ref{JS} we review the usual pairing $\lip \alpha,\nu \rip$, fix norms on $LT$ and $LT^*$ associated to the Killing form.)
\subsection{Root Systems}
Let $R$ be the root system with respect to $T$. Let $\tau\in \Gamma$, where $\Gamma$ denotes the group of diagram automorphisms of the Dynkin diagram of $G^{0}$. In fact $\tau$ induces an outer automorphism of $G^0$. We in particular consider the groups $G=G^0\rtimes \langle g_0 \rangle$ such that the conjugation action of $g_0$ on $G^0$ gives an outer automorphism of $G^0$.   If $G^{0}$ is not simply connected we have $G^0=\hat{G^0}/Z_1$, where $\hat{G^0}$ denotes the
universal covering group of $G^0$, and $Z_1$ is some subgroup of the center of $\hat{G^0}$ .
Let $\Gamma_{Z_1}$ be the subgroup of $\Gamma$ which leaves $Z_1$ fixed. Define $\widetilde{G^0}=G^0\rtimes_{\phi} \Gamma_{Z_1}$, where $\phi : \Gamma \to \mathrm{Aut}(G^0)$ is a homomorphism (see \cite[section 2.1 ]{wendt} for details) . Take $\bar{G^0}\subset \widetilde{G^0}$ to be any sub extension of $G^0$. Write $S= (T^{\tau})^0$ for the connected component of group of fixed points of $\tau$ inside $T$ with Lie algebra $LS$. Define $S_0=T^{\tau}$, the sub torus fixed by $\tau$. Define 
$$R^{\tau}=\{\alpha\mid_{LS_0}: \alpha\in R\},$$ 
where $LS_0$ denotes the Lie algebra of $S_0$ and $R^1=R^{\tau\vee}$. Note that $R^{\tau}$ is also a root system. Here $R^{\tau \vee}= \{\frac{2 \alpha}{(\alpha, \alpha)} \mid \alpha \in R^{\tau}\}$, where the bilinear form $(,)$ is a suitable multiple of Killing form such that $(\alpha, \alpha)=2$ for a long root $\alpha$.
Let $e:\mathbb{C}\to \mathbb{C}$ denote the exponential map. For $\mu\in LT^*$ we have the map $e(\mu):LT\to \mathbb{C}$ given by $e(\mu)(h)\mapsto e^{\mu(h)}$ for $h\in LT$.


Write $\rho^{\tau} = \half \sum_{\beta \in R^{1+}} \beta $.

\section{Lifting Criteria for Semi-direct Products}\label{semidirect}

%
%
%
%
%
%
%
%
%
%
%
%
%
%

One can detect the spinoriality of a representation of a Lie group from the spinoriality of its restrictions to certain subgroups. We state the result as the following lemma. For a Lie group $G$ let $G^0$ denote its connected component containing identity.  

\begin{lemma}\label{prod}
	Let $G$ be a Lie group and $H$ be a subgroup of it such that $G=G^0\cdot H$. Then any orthogonal representation $\pi$ of $G$ is spinorial if and only if $\pi_1=\pi\mid_{G^0}$ and $\pi_2=\pi\mid_H$ are spinorial and the lifts of $\pi_1$ and $\pi_2$ agree on $G^0\cap H$. 
\end{lemma}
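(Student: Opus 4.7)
The plan is to handle the two implications separately. The forward direction is immediate: if $\widehat\pi:G\to\Pin(V)$ is a lift of $\pi$, then $\widehat\pi|_{G^0}$ and $\widehat\pi|_H$ are lifts of $\pi_1$ and $\pi_2$, and as restrictions of a single homomorphism they tautologically agree on $G^0\cap H$.

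For the converse, suppose lifts $\widehat\pi_1:G^0\to\Pin(V)$ and $\widehat\pi_2:H\to\Pin(V)$ are given whose restrictions to $G^0\cap H$ coincide. The natural candidate for a global lift is
\[
\widehat\pi(gh) := \widehat\pi_1(g)\,\widehat\pi_2(h),\qquad g\in G^0,\ h\in H.
\]
First I would verify well-definedness: if $g_1h_1=g_2h_2$, then $g_2^{-1}g_1 = h_2 h_1^{-1}\in G^0\cap H$, and the hypothesis $\widehat\pi_1(g_2^{-1}g_1) = \widehat\pi_2(h_2 h_1^{-1})$ rearranges to $\widehat\pi_1(g_1)\widehat\pi_2(h_1) = \widehat\pi_1(g_2)\widehat\pi_2(h_2)$. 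Continuity presents no difficulty: $G^0$ is open in $G$, so $G$ decomposes as a disjoint union of cosets $G^0 h$, each homeomorphic to $G^0$ via right translation, on which $\widehat\pi$ is manifestly continuous. What remains is the homomorphism property. Using normality of $G^0$ in $G$, I rewrite $(g_1h_1)(g_2h_2) = g_1(h_1g_2h_1^{-1})(h_1h_2)$, and the required identity reduces to the single compatibility
\[
\widehat\pi_1(hgh^{-1}) \;=\; \widehat\pi_2(h)\,\widehat\pi_1(g)\,\widehat\pi_2(h)^{-1},\qquad h\in H,\ g\in G^0.
\]

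This conjugation-compatibility is the main obstacle. Both sides project to $\pi(hgh^{-1})$ under $\rho:\Pin(V)\to\Or(V)$, hence their ratio lies in $\ker\rho=\{\pm 1\}$. Fixing $h\in H$, the map
\[
g\longmapsto \widehat\pi_2(h)^{-1}\,\widehat\pi_1(hgh^{-1})\,\widehat\pi_2(h)\,\widehat\pi_1(g)^{-1}
\]
is a continuous $\{\pm1\}$-valued function on $G^0$ taking the value $1$ at $g=e$; connectedness of $G^0$ forces it to be identically $1$, giving the compatibility. This is the only step where the topology of $G^0$ enters in an essential way — everything else is formal bookkeeping — and it explains why no hypothesis beyond agreement on $G^0\cap H$ is needed.
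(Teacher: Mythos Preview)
Your proof is correct and follows essentially the same approach as the paper: define the lift by $\widehat\pi(gh)=\widehat\pi_1(g)\widehat\pi_2(h)$, check well-definedness via the agreement hypothesis on $G^0\cap H$, and establish the homomorphism property by noting that the obstruction is a continuous $\{\pm1\}$-valued function on a connected domain equal to $1$ at the identity. The only organizational difference is that the paper works with fixed coset representatives and treats the obstruction as a function on $G^0\times G^0$, whereas you first reduce cleanly to the conjugation compatibility $\widehat\pi_1(hgh^{-1})=\widehat\pi_2(h)\widehat\pi_1(g)\widehat\pi_2(h)^{-1}$ and then run the connectedness argument on a single copy of $G^0$; this is a cosmetic streamlining, not a different method.
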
	

\begin{proof}
	
	If $\pi$ is spinorial then $\pi_1$ and $\pi_2$ are spinorial. For the converse let the representations $\pi_1$ and $\pi_2$ be spinorial. We write $\hat{\pi_i}$ for the lift of $\pi_i$.  
	We have
	$$G=\bigsqcup_{\alpha \in I} G^0h_{\alpha},$$  where $h_{\alpha}$'s are the representatives of the cosets of $G^0$ in $G$ and $I$ is an indexing set.
	Now any element of $g\in G$ can be written as $g=a\cdot h_{\alpha}$, where $a\in G^0$.
	We define the lift of $\pi$ as 
	$$\hat{\pi}(g)=\hat{\pi}(a\cdot h_{\alpha})=\hat{\pi}_1(a)\hat{\pi}_2(h_{\alpha}).$$
	For a different coset representation if we have $g=a'\cdot h_{\beta}$, then $a'^{-1}a=h_{\beta}h_{\alpha}^{-1}\in G^0\cap H$. Since $\hat{\pi}_1$ and $\hat{\pi}_2$ agree on $G^0\cap H$, we obtain $\hat{\pi}_1(a'^{-1}a)=\hat{\pi}_2(h_{\beta}h_{\alpha}^{-1})$. This gives $\hat{\pi}_1(a)\hat{\pi}_2(h_{\alpha})=\hat{\pi}_1(a')\hat{\pi}_2(h_{\beta})$. Therefore the lift $\hat{\pi}$ is well-defined.
	
To prove that $\hat{\pi}$ is a lift it suffices to show that $\hat{\pi}$ is a homomorphism, i.e. for two elements $g_1, g_2\in G$, we require
\begin{equation}\label{hom}
\hat{\pi}(g_1g_2)=\hat{\pi}(g_1)\hat{\pi}(g_2).
\end{equation}
Let $g_1=a_1h_{\beta}$, $g_2=a_2h_{\gamma}$ and $h_{\beta} \cdot h_{\gamma}\in G^0h_{\alpha}$. Therefore $h_{\beta} \cdot h_{\gamma}\cdot h_{\alpha}^{-1}\in G^0$. We write
	\begin{align*}
	\hat{\pi}(g_1g_2)&=\hat{\pi}(a_1h_{\beta}a_2h_{\gamma}h_{\alpha}^{-1}h_{\alpha})\\
	&= \hat{\pi}((a_1h_{\beta}a_2h_{\beta}^{-1})(h_{\beta}h_{\gamma}h_{\alpha}^{-1})h_{\alpha})\\
	&=\hat{\pi}_1((a_1h_{\beta}a_2h_{\beta}^{-1})(h_{\beta}h_{\gamma}h_{\alpha}^{-1}))\hat{\pi}_2(h_{\alpha})\\
		&=\hat{\pi}_1(a_1h_{\beta}a_2h_{\beta}^{-1})\hat{\pi}_1(h_{\beta}h_{\gamma}h_{\alpha}^{-1})\hat{\pi}_2(h_{\alpha}).
	\end{align*}
	Note that since $G^0$ is normal in $G$, we have $a_1h_{\beta}a_2h_{\beta}^{-1}\in G^0$. We can rewrite the requirement mentioned in \eqref{hom} as
	$$\hat{\pi}_1(a_1h_{\beta}a_2h_{\beta}^{-1})\hat{\pi}_1(h_{\beta}h_{\gamma}h_{\alpha}^{-1})\hat{\pi}_2(h_{\alpha})=\hat{\pi}_1(a_1)\hat{\pi}_2(h_{\beta})\hat{\pi}_1(a_2)\hat{\pi}_2(h_{\gamma}).$$
Consider the element 
	$$x=(\hat{\pi}_1(a_1h_{\beta}a_2h_{\beta}^{-1})\hat{\pi}_1(h_{\beta}h_{\gamma}h_{\alpha}^{-1})\hat{\pi}_2(h_{\alpha}))^{-1}\hat{\pi}_1(a_1)\hat{\pi}_2(h_{\beta})\hat{\pi}_1(a_2)\hat{\pi}_2(h_{\gamma}).$$
	Taking the image of $x$ under the covering map $\rho$ we obtain 
	$$\rho(x)=({\pi}_1(a_1h_{\beta}a_2h_{\beta}^{-1}){\pi}_1(h_{\beta}h_{\gamma}h_{\alpha}^{-1}){\pi}_2(h_{\alpha}))^{-1}{\pi}_1(a_1)\pi_2(h_{\beta}){\pi}_1(a_2){\pi}_2(h_{\gamma}).$$
	Since both $\pi_1$ and $\pi_2$ are restrictions of the same representation we replace them by $\pi$ for the convenience of the computation. Thus we obtain 
	\begin{align*}
	\rho(x)&=({\pi}(a_1h_{\beta}a_2h_{\beta}^{-1}){\pi}(h_{\beta}h_{\gamma}h_{\alpha}^{-1}){\pi}(h_{\alpha}))^{-1}{\pi}(a_1)\pi_2(h_{\beta}){\pi}(a_2){\pi}(h_{\gamma})\\
	&=(\pi(a_1h_{\beta}a_2h_{\gamma}))^{-1}\pi(a_1h_{\beta}a_2h_{\gamma})\\
	&=1.
	\end{align*}
	Therefore we should have $x=\pm 1$. If we fix $h_{\beta}$ and $h_{\gamma}$ then $x$ becomes a continuous function on $G^0\times G^0$. Note that $h_{\alpha}$ depends only on $h_{\beta}$ and $h_{\gamma}$. Since $ G^0\times  G^0$ is connected $x$ takes a constant value on this domain. Taking $a_1=a_2=1$ we obtain
	\begin{align*}
	x &=(\hat{\pi}_1(h_{\beta}h_{\beta}^{-1})\hat{\pi}_1(h_{\beta}h_{\gamma}h_{\alpha}^{-1})\hat{\pi}_2(h_{\alpha}))^{-1}\hat{\pi}_1(1)\hat{\pi}_2(h_{\beta})\hat{\pi}_1(1)\hat{\pi}_2(h_{\gamma})\\
	&=(\hat{\pi}_2(h_{\beta}h_{\gamma}h_{\alpha}^{-1}h_{\alpha}))^{-1}\hat{\pi}_2(h_{\beta}h_{\gamma})\quad \text{as}\quad h_{\beta}h_{\gamma}h_{\alpha}^{-1}\in G^0\cap H   \\ 
	&=1
	\end{align*}
	So $x$ takes the value $1$ at $1\times 1$, so $x$ takes the value $1$ on $ G^0\times  G^0$. Since this is true for all $h_{\beta},h_{\gamma}$, we conclude that $x=1$. In other words the map $\hat{\pi}$ is a homomorphism.

Next we claim that the map $\hat{\pi}$ is continuous. Since $G^0$ is open in $G$, so is $ G^0h_{\alpha}$. We have 
$$\hat{\pi}\mid_{ G^0h_{\alpha}}(ah_{\alpha})=\hat{\pi}_1(a)\hat{\pi}_2(h_{\alpha}),\quad \text{for $ah_{\alpha}\in  G^0h_{\alpha}$}.$$
Note that the set $\{ G^0h_{\alpha}\mid\alpha\in I\}$ forms an open cover of $G$ and $\hat{\pi}\mid_{ G^0h_{\alpha}}$ is continuous for all $\alpha\in I$. Therefore $\hat{\pi}$ is continuous.
%
\end{proof}
Consider a group $G$ with the following conditions:
\begin{enumerate}
	\item $G = G_1 \rtimes_{\phi} G_2$,
	\item $G_1$ is a connected Lie group,
	\item $G_2$ is a discrete group.
\end{enumerate}

We prove the following theorem.
\begin{thm}\label{spinsemi}
	A representation $\phi$ of $G$ is spinorial if and only if $\phi\mid_{G_1}$ and $\phi\mid_{G_2}$ are spinorial.
\end{thm}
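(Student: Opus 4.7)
The plan is to deduce this theorem directly from Lemma \ref{prod} by identifying the right subgroups. The forward implication is immediate: if $\pi:G\to \Or(V)$ lifts to $\hat\pi:G\to \Pin(V)$, then the restrictions $\hat\pi\mid_{G_1}$ and $\hat\pi\mid_{G_2}$ are Lie group homomorphisms lifting $\pi\mid_{G_1}$ and $\pi\mid_{G_2}$ respectively, so both restrictions are spinorial.

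For the converse, I would first identify $G^0$, the connected component of $G$ containing the identity. Since $G_1$ is a connected Lie group and $G_2$ is discrete, the semidirect product $G = G_1\rtimes G_2$ has identity component exactly $G_1$ (viewed as $G_1\times\{e\}$). Taking $H = G_2$ (viewed as $\{e\}\times G_2$), we immediately have $G = G^0\cdot H = G_1\cdot G_2$ from the definition of the semidirect product, so the hypotheses of Lemma \ref{prod} are met.

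The key observation is that $G^0\cap H = G_1\cap G_2 = \{e\}$ inside $G$. Thus the compatibility condition in Lemma \ref{prod}, namely that the lifts of $\pi\mid_{G^0}$ and $\pi\mid_H$ agree on $G^0\cap H$, reduces to agreement at the identity element. Since any lift to $\Pin(V)$ is a group homomorphism, it sends the identity to the identity, so the two lifts trivially agree on $\{e\}$.

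Therefore, assuming $\pi\mid_{G_1}$ and $\pi\mid_{G_2}$ are both spinorial, Lemma \ref{prod} produces a well-defined lift $\hat\pi:G\to \Pin(V)$, showing that $\pi$ is spinorial. There is no real obstacle here beyond verifying the two set-theoretic facts $G = G_1\cdot G_2$ and $G_1\cap G_2 = \{e\}$, which are structural features of the (external) semidirect product; the content of the theorem is entirely absorbed into Lemma \ref{prod}.
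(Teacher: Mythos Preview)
Your argument is correct and matches the paper's proof exactly: identify $G^0=G_1$, take $H=G_2$, observe $G=G_1\cdot G_2$ and $G_1\cap G_2=\{e\}$, and invoke Lemma~\ref{prod}. The only addition you make is spelling out why the compatibility condition on $G^0\cap H$ is vacuous (both lifts are homomorphisms, hence send $e$ to $e$), which the paper leaves implicit.
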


\begin{proof}[Proof of Theorem $1$]
	Since $G=G_1\rtimes_{\phi} G_2$ we obtain 
	\begin{enumerate}
		\item 
		$G^1=G^0$ and $G=G_1\cdot G_2$, 
		\item 
		$G_1\cap G_2=\{e\}$.
	\end{enumerate}	
	Therefore the corollary follows from Lemma \ref{prod}.
\end{proof}

\begin{lemma} \label{L}
	Let $G$,  $G'$,  $H$ be connected real Lie groups and $\phi :H \rightarrow G$ be a homomorphism. Let  $\alpha :G' \rightarrow G$ be a cover.    Then $\phi$ can be lifted to $G'$  if and only if the image of $\phi_*$ in $\pi_1(G)$ is contained in the image of $\alpha_*$.
\end{lemma}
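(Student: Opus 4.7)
The plan is to reduce this to the classical lifting criterion from covering space theory and then upgrade the resulting continuous lift to a Lie group homomorphism. The forward direction is immediate: if $\tilde{\phi}\colon H\to G'$ satisfies $\alpha\circ \tilde{\phi}=\phi$, then on fundamental groups $\phi_{*}=\alpha_{*}\circ \tilde{\phi}_{*}$, so $\phi_{*}(\pi_1(H))\subseteq \alpha_{*}(\pi_1(G'))$. The work is in the converse.

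For the converse, I would first forget the Lie group structure and work with $\phi$ as a continuous map between based topological spaces $(H,e_H)\to (G,e_G)$. Since $H$ is a connected Lie group, it is path-connected and locally path-connected, and $\alpha\colon G'\to G$ is a covering map of Lie groups (hence of topological spaces). The standard topological lifting criterion (e.g. Hatcher, Prop.~1.33) then says that the hypothesis $\phi_{*}(\pi_1(H))\subseteq \alpha_{*}(\pi_1(G'))$ is exactly what is needed to produce a continuous lift $\tilde{\phi}\colon H\to G'$ with $\tilde{\phi}(e_H)=e_{G'}$ and $\alpha\circ\tilde{\phi}=\phi$.

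The main obstacle is that such a topological lift is a priori only a continuous based map, not a group homomorphism. To fix this, I would use the standard connectedness trick. Consider the map
\begin{equation*}
F\colon H\times H\longrightarrow G',\qquad F(h_1,h_2)=\tilde{\phi}(h_1)\,\tilde{\phi}(h_2)\,\tilde{\phi}(h_1h_2)^{-1}.
\end{equation*}
Since $\phi$ is a homomorphism, $\alpha\circ F$ is the constant map $e_G$, so $F$ lands in the discrete kernel $\ker\alpha$. As $H\times H$ is connected and $F$ is continuous, $F$ is constant; evaluating at $(e_H,e_H)$ gives $F\equiv e_{G'}$, which is precisely the statement that $\tilde{\phi}$ is a homomorphism.

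Finally, to check that $\tilde{\phi}$ is a homomorphism of \emph{Lie} groups, I would observe that $\alpha$ is a local diffeomorphism: around each point of $H$ we can pick a neighbourhood $U$ so small that $\phi(U)$ is contained in an evenly covered open set, on which $\alpha$ has a smooth local inverse $\sigma$. Then $\tilde{\phi}|_{U}=\sigma\circ \phi|_{U}$ is smooth as a composition of smooth maps, so $\tilde{\phi}$ is smooth, completing the proof.
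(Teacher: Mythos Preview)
Your proof is correct and follows essentially the same route as the paper: invoke the topological lifting criterion to get a continuous based lift, then use the connectedness trick (the map $(h_1,h_2)\mapsto \tilde\phi(h_1)\tilde\phi(h_2)\tilde\phi(h_1h_2)^{-1}$ lands in the discrete kernel, hence is constant) to upgrade it to a homomorphism. Your argument is slightly more complete in that you also verify smoothness via local sections of $\alpha$, a point the paper leaves implicit.
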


\begin{proof}
It follows from the lifting theorem in algebraic topology, that there is a unique continuous topological lift $\psi$, which takes identity of $H$ to identity of $G'$.  We will prove that $\psi$ is a group homomorphism.	
	\begin{center}
		\begin{tikzpicture}
		\node (A1) at (0,0) {$H$};
		\node (A2) at (3,0) {$G$};
		\node (B1) at (3,3) {$G'$};
		
		\draw[->](A1)to node [above]{$\phi$} (A2);
		\draw[->](B1) to node [right]{$\alpha$} (A2);
		\draw[densely dotted,->](A1) to node [above]{$\widehat{\psi}$} (B1);
		\end{tikzpicture}
	\end{center}
 Let $*$ denote the multiplication in any group. We have $ \phi(g_1*g_2) = \phi(g_1)*\phi(g_2)$.  So we get $\alpha \circ \psi (g_1*g_2) = \alpha \psi (g_1) * \alpha \psi (g_2)$.  Hence $\alpha (\psi (g_1*g_2) \psi(g_1)^{-1} \psi(g_2)^{-1}) = 1$. The image of the map $p : H \times H \rightarrow G'$ given by $(g_1,  g_2) \rightarrow \psi (g_1*g_2) \psi(g_1)^{-1} \psi(g_2)^{-1}$ is connected, since $H$ is connected. The kernel of $\alpha$ is discrete, as it is a covering map.  Thus,  we get $(\psi (g_1*g_2) \psi(g_1)^{-1} \psi(g_2)^{-1}) = 1$.  Hence $\psi (g_1*g_2)= \psi(g_1) \psi(g_2)$.  Thus $\psi$ is, in fact, a group homomorphism.
	
\end{proof}

 In fact for a real, reductive Lie group $G$, the spinoriality of a representation of it can be detected by the spinoriality of its restriction to its maximal compact subgroup.

\begin{theorem}\label{thm2}
	Let $G$ be a reductive real Lie group such that $G^0$ has finite index in $G$. Let $K$ be a maximal compact subgroup of $G$. Then an orthogonal representation $\phi$ of $G$ is spinorial if and only if $\phi\mid_K$ is spinorial.
\end{theorem}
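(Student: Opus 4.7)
The plan is to derive the theorem from Lemma \ref{prod} applied with $H = K$, using the classical fact that the inclusion $K^0 \hookrightarrow G^0$ is a homotopy equivalence for a reductive real Lie group. The forward implication is immediate: restricting a lift $\widehat{\phi} : G \to \Pin(V)$ of $\phi$ yields a lift of $\phi|_K$.

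For the converse, I would first record two structural inputs. Since $G$ is reductive with finitely many components and $K$ is maximal compact, the natural map $K/K^0 \to G/G^0$ is a bijection, so $G = G^0 \cdot K$ and $G^0 \cap K = K^0$. The global Cartan decomposition $G^0 \cong K^0 \times \mathfrak{p}$ then exhibits $K^0 \hookrightarrow G^0$ as a homotopy equivalence and induces an isomorphism $\pi_1(K^0) \xrightarrow{\sim} \pi_1(G^0)$.

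Now assume $\phi|_K$ admits a lift $\widehat{\psi} : K \to \Pin(V)$. Its restriction to $K^0$ is a lift of $\phi|_{K^0} : K^0 \to \SO(V)$ to $\Spin(V)$, so by Lemma \ref{L} the image of $(\phi|_{K^0})_*$ in $\pi_1(\SO(V))$ lies in the image of $\pi_1(\Spin(V))$. The isomorphism of fundamental groups transports this criterion to $\phi|_{G^0}$, and a second application of Lemma \ref{L} produces a homomorphism $\widehat{\psi}' : G^0 \to \Spin(V)$ lifting $\phi|_{G^0}$, which we normalize by $\widehat{\psi}'(1) = 1$. Uniqueness of continuous lifts on the connected space $K^0$ then forces $\widehat{\psi}'|_{K^0} = \widehat{\psi}|_{K^0}$. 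With the two lifts agreeing on $G^0 \cap K$, Lemma \ref{prod} assembles them into a global lift of $\phi$.

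The main obstacle is purely expository: locating the structural facts $G = G^0 \cdot K$, $G^0 \cap K = K^0$, and the global Cartan decomposition in a form that covers the stated hypothesis of a reductive real Lie group with finite component group. Once these standard inputs are in hand, the theorem reduces to a clean combination of Lemmas \ref{prod} and \ref{L}.
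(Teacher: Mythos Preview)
Your proposal is correct and follows essentially the same route as the paper: both invoke the homotopy equivalence $K^0 \hookrightarrow G^0$ to transfer the lifting criterion of Lemma~\ref{L} from $K^0$ to $G^0$, use uniqueness of lifts on the connected space $K^0 = G^0 \cap K$ to match the two lifts, and then apply Lemma~\ref{prod} with $H=K$. The only cosmetic difference is that you extract the structural inputs from the Cartan decomposition and the bijection $K/K^0 \to G/G^0$, whereas the paper cites them directly from \cite[p.~257, Theorem~2.2]{HGSN}.
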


\begin{proof}
	One direction is obvious. Assume that $\phi\mid_K$ is spinorial. We denote the lift of $\phi\mid_K$ by $\hat{\phi}_K$. Note that  \cite[page $257$ Theorem $2.2$ part $3$]{HGSN} the inclusion map $i:K\to G$ is a homotopy equivalence, which means 
	\begin{enumerate}
		\item 
		We have $G=G^{\circ}\cdot K$.
		\item 
		The map $i_{*}:\pi_1(K^{\circ})\to \pi_1(G^{\circ})$ is an isomorphism.
	\end{enumerate}
	
Since $\pi\mid_K$ lifts, from Lemma \ref{L} we have $\phi_{*}(\pi_1(K^0))\subset \rho_{*}(\pi_1(\Spin(V)))$. Thus we have $\phi_{*}(\pi_1(G^0))\subset \rho_{*}(\pi_1(\Spin(V)))$ which says that $\phi\mid_{G^{\circ}}$ is spinorial. We denote the lift by $\hat{\phi}_0$. Since the lift $\hat{\phi}_0$ is unique we have $\hat{\phi}_0\mid_{K^0}=\hat{\phi}_K\mid_{K^0}$. Note that $K^0=G^0\cap K$.
Now the theorem follows from Lemma \ref{prod} by taking $H=K$.  
\end{proof}

\section{Review of Joshi-Spallone} \label{JS}

This paper gives critria for spinoriality of the representations of connected, reductive algebraic groups over fields of characteristic zero. We restrict our discussion to real, compact Lie groups. 
We continue with the notations as in section \ref{notpre}. Write $\langle,\rangle_T : X^{*}(T )\times X_{*}(T)\to \Z$ for the pairing
$$\langle \mu, \nu\rangle_T=n \Leftrightarrow \mu(\nu(t))=t^n$$
for $t \in \mathbb{R}^{\times}$ , and $\langle,\rangle_{LT}: LT^{*}\times LT$ for the natural pairing. Note that for
$\mu \in X^{*}(T)$ and $\nu\in X_{*}(T)$, we have
$$\langle d\mu, d\nu(1)\rangle_{LT} = \langle \mu, \nu\rangle_{T} .$$
So we may drop the subscripts and simply write $` \langle \mu, \nu\rangle'$.
Write $(,)$ for the Killing form of $\mathfrak{g}$ restricted to $LT$; it may be computed
by
$$(x, y) =\sum_{\alpha\in R}\alpha(x)\alpha(y),$$
for $x, y \in LT$. Also set $|x|^2 =(x, x)$. In particular, for $\nu \in X_{*}(T)$ we have
$$|\nu|^2 = \sum_{\alpha\in R} \langle \alpha, \nu \rangle^2 \in 2\Z.$$
The Killing form restricted further to $LT$ induces an isomorphism $\sigma :LT^*\cong LT$. We use the same notation $( , )$ to denote the inverse form on $LT^{*}$ defined for $\mu_1, \mu_2 \in LT$ by
$(\mu_1, \mu_2 ) = (\sigma(\mu_1),\sigma(\mu_2)).$

%

Pick co-characters $\nu_1, \ldots, \nu_r$ whose images generate $\pi_1(G)$, where $\pi_1(G)=X_{*}(T)/Q(T)$ i.e., co-character lattice modulo co-root lattice.
Consider the integer
\beq
p(\underline \nu)=p(\nu_1, \ldots, \nu_r)=\half \gcd \left( |\nu_1|^2, \ldots, |\nu_r|^2 \right).
\eeq
The following theorem follows from \cite[Theorem $1$]{joshi}
\begin{theorem}
The irreducible representation $\pi_{\lambda}$ of a connected reductive Lie group $G$ is spinorial if and only if
$$\dfrac{p(\underline\nu)\dim V^{\lambda}\chi_{\lambda}(C)}{\dim \mathfrak{g}}\equiv 0\pmod2,$$
\end{theorem}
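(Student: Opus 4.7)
My plan is to translate spinoriality into a condition on fundamental groups, evaluate it weight-by-weight, and then recognise the Casimir.

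Applying Lemma \ref{L} to the double cover $\Spin(V)\to\SO(V)$, the representation $\pi_\lambda$ lifts to $\Spin(V)$ if and only if the induced map $\pi_{\lambda,*}\colon\pi_1(G)\to\pi_1(\SO(V))=\Z/2$ is zero. Since $G$ is connected reductive with maximal torus $T$ we have $\pi_1(G)=X_{*}(T)/Q(T)$, so it suffices to check vanishing on the cocharacter generators $\nu_1,\dots,\nu_r$ of the hypothesis. For such a $\nu$, decompose $V$ into $T$-weight spaces; because $\pi_\lambda$ is orthogonal the nonzero weights pair up as $\{\pm\mu\}$ with equal multiplicity, so $V$ splits (over $\R$) into $T$-invariant two-dimensional planes together with a zero-weight summand. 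The loop $t\mapsto\pi_\lambda(\nu(e^{2\pi it}))$ acts on the $\mu$-plane as a rotation of winding number $\langle\mu,\nu\rangle$, hence
\begin{equation*}
\pi_{\lambda,*}(\nu)\equiv\sum_{\mu>0}\dim V_\mu\cdot\langle\mu,\nu\rangle\equiv\tfrac{1}{2}\sum_{\mu}\dim V_\mu\cdot\langle\mu,\nu\rangle^{2}\pmod 2,
\end{equation*}
using $k\equiv k^{2}\pmod 2$ termwise.

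The next step is the trace identity
\begin{equation*}
\sum_{\mu}\dim V_\mu\cdot\langle\mu,\nu\rangle^{2}=\frac{\dim V^{\lambda}\cdot\chi_{\lambda}(C)}{\dim\mf g}\cdot|\nu|^{2}.
\end{equation*}
By simplicity of $\mf g$, the $G$-invariant symmetric bilinear form $(x,y)\mapsto\tr\pi_\lambda(x)\pi_\lambda(y)$ on $\mf g$ is a scalar multiple of the Killing form; taking the trace of the Casimir on $V^{\lambda}$ identifies that scalar as $\dim V^{\lambda}\cdot\chi_{\lambda}(C)/\dim\mf g$. Specialising to $x=y=d\nu(1)$, where $\mu(d\nu(1))=\langle\mu,\nu\rangle$ and $(d\nu(1),d\nu(1))=|\nu|^{2}$, yields the identity.

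Combining the two computations, spinoriality is equivalent to
\begin{equation*}
\frac{\dim V^{\lambda}\cdot\chi_{\lambda}(C)\cdot|\nu_i|^{2}/2}{\dim\mf g}\equiv 0\pmod 2\qquad\text{for every }i,
\end{equation*}
each term being an integer because both weights and roots pair up as $\pm$. Since for fixed $a,n$ the system $ab_i\equiv 0\pmod n$ for all $i$ is equivalent to $a\cdot\gcd(b_i)\equiv 0\pmod n$, this collapses to the single congruence of the theorem once we substitute $p(\underline\nu)=\tfrac{1}{2}\gcd(|\nu_i|^{2})$. I expect the main obstacle to be the normalisation bookkeeping in the trace identity: one must verify that the scalar relating the trace form and the Killing form is precisely $\dim V^{\lambda}\chi_{\lambda}(C)/\dim\mf g$ under the conventions of Section \ref{notpre}, and that $|\nu|^{2}\in 2\Z$ so that each displayed quantity is an integer whose parity can be tested.
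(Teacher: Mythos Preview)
The paper does not prove this theorem at all; it is quoted from \cite[Theorem~1]{joshi} and used as a black box throughout. So there is no in-paper argument to compare against, and your sketch is really an independent proof of the cited result. For simple $\mf g$ your argument is correct and is essentially the expected one: the reduction to $\pi_1$ via Lemma~\ref{L}, the weight-by-weight computation of the class in $\pi_1(\SO(V))\cong\Z/2\Z$, the passage from $\sum_{\mu>0}\dim V_\mu\,\langle\mu,\nu\rangle$ to $\tfrac12\sum_\mu\dim V_\mu\,\langle\mu,\nu\rangle^2$, and the trace-form identification of the scalar $c=\dim V^\lambda\chi_\lambda(C)/\dim\mf g$ are all sound. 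The gcd step also goes through even though $a=\dim V^\lambda\chi_\lambda(C)/\dim\mf g$ need not be an integer: since each $a\cdot|\nu_i|^2/2$ is an integer, one checks directly that the system of parity conditions is equivalent to the single one with $p(\underline\nu)$.

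The one genuine gap is the clause ``by simplicity of $\mf g$''. The statement says \emph{connected reductive}, and for such $\mf g$ the space of invariant symmetric bilinear forms is not one-dimensional, so the trace form $B$ is not automatically a scalar multiple of the Killing form; indeed $K$ is degenerate on the centre and the formula $\chi_\lambda(C)=(\lambda,\lambda+2\delta)$ needs interpretation. Since the rest of the paper only applies the result to simple $G^0$, the cleanest fix is to restrict the hypothesis to simple $\mf g$, or else explain how the general reductive case decomposes along simple factors and the centre. A minor point: you implicitly use $\pi_1(\SO(V))=\Z/2\Z$, which needs $\dim V\ge 3$; the remaining cases are trivial but deserve a word.
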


\section{Review of Wendt}\label{wendt}

This paper \cite{wendt} gives Weyl character formula for character values of representations of real compact Lie groups $G$ with two connected components with $G^0$ of type $A_n, D_n$ and $E_6$. We continue with the notations as in section \ref{notpre}.
We write
$$\rho^{\tau}=\frac{1}{2}\sum_{\bar{\beta}\in R^1_{+}}\bar{\beta}.$$
We have $\delta^{\tau}:LS_0\to \C$, defined as $\delta^{\tau}(x)=e(\rho^{\tau}(x))\prod_{\bar{\beta}\in R_{+}^1}(1-e(\bar{\beta}(x)))$ for $x\in LS_0$.
For $\mu\in LS_0^{*}$, we have
$$A^{\tau}(\mu)=\sum_{w\in W^{\tau}}\epsilon(w)\cdot e(w\mu),$$
where $W^{\tau}$ is the Weyl group of the root system $R^{\tau}$ and $\epsilon$ is the sign character of the Weyl group $W^{\tau}$. Since $S_0$ is regular in $G$, we can choose a Weyl chamber $K\subset LT^{*}$ such that $K\cap LS_0^{*}$ is non-empty. Write $\bar{K}$ to denote the closure of $K$. Let $I$ denote the lattice $I=\ker(\exp)\cap LT$ and $I^{*}\subset LT^{*}$ be its dual.  For this paper we consider $G=\bar{G^0}=G^0 \rtimes C_2$, where $C_2=\lip g_0 \rip$.
Let $\pi^{\lambda}$ denote the irreducible representation of $G^0$ with highest weight $\lambda$. Write $\tau({\lambda})$ to denote the highest weight of the representation $\pi^{\lambda}(\tau\cdot g)$. Following Section \ref{intro} the representation $\rho^{\lambda}=\mathrm{Ind}_{G_0}^G\pi^{\lambda}$ is either of Type I or Type II. Let $\chi_{\lambda}$ denote the character of $\pi_{\lambda}$ and $\tilde{\chi}_{\lambda}$ denote the irreducible character of $G$. 
 We in particular take $\tau=C_{g_0}$, the conjugation action of $g_0$ on $G^0$.  We define a function $\tilde{\chi}_{\lambda}^{\tau}:LS_0\to \C$ as $\tilde{\chi}_{\lambda}^{\tau}(h)=\widetilde{\chi}_{\lambda}(g_0\cdot\exp(h))$. From \cite[Theorem $2.6$]{wendt} and \cite[Corollary $2.7$]{wendt} we obtain the following result.
\begin{theorem}\label{chwendt}
There exists an irreducible character $\widetilde{\chi}_{\lambda}$ of $\bar{G^0}$ for each $\lambda \in I^{*} \cap \bar{K}$. If $\lambda\notin LS_0^{*}$ then $\rho^{\lambda}$ is irreducible. In this case we have
$$\widetilde{\chi}_{\lambda}\mid_{G^0}=\chi_{\lambda}+\chi_{\tau(\lambda)},$$
and 
$$\widetilde{\chi}_{\lambda}\mid_{g_0G^0}=0,$$
		
For each $\lambda\in LS_0^{*}$, $\rho^{\lambda}$ splits into two irreducibles $\pi^{\pm}_{\lambda}$. In this case we have
$$\widetilde{\chi}_{\lambda}\mid_{G^0}=\chi_{\lambda},$$
and
$$\tilde{\chi}^{\tau}_{\lambda}(g_0\cdot \exp(h))=\widetilde{\chi}_{\lambda}^{\tau}(h)=\pm A^{\tau}(\lambda+\rho^{\tau})(h)/A^{\tau}(\rho^{\tau})(h),$$
where $h\in LS_0$.
\end{theorem}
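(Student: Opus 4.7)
The plan is to separate the two cases $\lambda \notin LS_0^*$ and $\lambda \in LS_0^*$ and apply Clifford/Mackey theory for the index-two normal inclusion $G^0 \trianglelefteq G$, then derive the character formulas either from the standard induced-character formula or from the twisted Weyl character formula on the non-identity component.

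First I would identify when the induced representation is irreducible. The conjugated representation $\pi^\lambda \circ \Ad(g_0)$ has highest weight $\tau(\lambda)$, so $\pi^\lambda \cong \pi^\lambda \circ \Ad(g_0)$ iff $\tau(\lambda) = \lambda$. Under the identification of $\tau$-fixed weights with elements of $LS_0^*$, this is equivalent to $\lambda \in LS_0^*$. Mackey's irreducibility criterion then gives that $\rho^\lambda = \Ind_{G^0}^G \pi^\lambda$ is irreducible exactly when $\lambda \notin LS_0^*$ (Type I), and splits as $\pi^{\lambda,+}\oplus\pi^{\lambda,-}$ when $\lambda \in LS_0^*$ (Type II), where $\pi^{\lambda,\pm}$ are the two extensions of $\pi^\lambda$ from $G^0$ to $G$, differing by the sign character of $G/G^0 \cong C_2$. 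This produces the irreducible character $\widetilde{\chi}_\lambda$ claimed in the first sentence.

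In the Type I case, the induced-character formula yields $\widetilde{\chi}_\lambda(g) = \chi_\lambda(g) + \chi_\lambda(g_0 g g_0^{-1}) = \chi_\lambda(g) + \chi_{\tau(\lambda)}(g)$ for $g \in G^0$, and $\widetilde{\chi}_\lambda(g) = 0$ for $g \in g_0 G^0$ since no $G^0$-conjugate of such an element lies in $G^0$. In the Type II case, $\pi^{\lambda,\pm}\mid_{G^0} = \pi^\lambda$ immediately gives $\widetilde{\chi}_\lambda\mid_{G^0} = \chi_\lambda$.

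The main obstacle is the character value on $g_0 G^0$ in the Type II case, and this is where I would invoke Wendt's twisted Weyl character formula. The key inputs are (i) that $S_0 = T^\tau$ meets every conjugacy class in the non-identity component, so that $g_0 \cdot \exp(h)$ with $h \in LS_0$ parametrises a set of regular representatives there, (ii) a version of Weyl's integration formula on $g_0 G^0$, and (iii) the identification of the folded root system $R^\tau$, which produces $A^\tau(\rho^\tau)$ as the $\tau$-twisted analogue of the Weyl denominator. The sign $\pm$ in the final formula simply tracks the sign-character twist distinguishing $\pi^{\lambda,+}$ from $\pi^{\lambda,-}$. Since these ingredients form precisely the content of Wendt's Theorem $2.6$ and Corollary $2.7$, the proof reduces to citing those results once the Clifford-theoretic setup is in place.
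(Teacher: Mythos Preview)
Your proposal is correct, and in fact it supplies more argument than the paper does: this theorem appears in the paper's ``Review of Wendt'' section and is not proved there at all---it is simply stated as a consequence of \cite[Theorem~2.6]{wendt} and \cite[Corollary~2.7]{wendt}. Your Clifford/Mackey setup for the Type~I/Type~II dichotomy and the induced-character computation on $G^0$ are exactly the right justifications for the easy parts, and your conclusion that the non-identity-component formula must be taken from Wendt matches the paper's treatment.
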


From \cite[page $36$]{wendt} we have the following table showing relation between type of $R$ and type of $R^{\tau}$.

\begin{center}

\begin{table}[ht]
	\caption{ Table showing relation between type of $R$ and type of $R^{\tau}$.}
	\centering	
	
	\tabcolsep=.5cm
	\begin{tabular}{|p{.8cm}| p{.8cm}|p{.8cm}|p{2cm}|p{.8cm}|p{.8cm}|}
		\hline
		$R$ &$A_{2n-1}$ &  $A_{2n}$  & $D_n (n\geq 4)$ & $D_4$ &  $E_6$\\
		\hline
		$\mathrm{ord}(\tau)$  & $2$ & $2$    & $2$    &  $3$ & $2$\\
		\hline
		$R^{\tau}$  & $C_n$ & $BC_n$    & $B_{n-1}$    &  $G_2$ & $F_4$ \\
		
		\hline
	\end{tabular}
	\label{table}
\end{table}
\end{center}

 For the even orthogonal groups $\Or(2n)$, we take $\tau$ to be the automorphism of the Dynkin diagram of type $D_n$ switching two of the extremal nodes. It corresponds to the conjugation map by $$g_0=\left(\sum_{i=1}^{2n-2}e_{i,i}+e_{2n-1,2n}+e_{2n,2n-1}\right),$$
where $e_{i,j}$ denotes the elementary $2n\times 2n$ matrix with $1$ at the $(i,j)$-th position and $0$ everywhere else.
Let $R_{\theta}=\begin{pmatrix}
\cos\theta & \sin\theta\\
-\sin\theta & \cos\theta
\end{pmatrix}$. Let $T$ be the maximal torus 
$$T=\diag(R_{\theta_1},\ldots,R_{\theta_n}).$$ 
Thus 
$$S_0=\diag(R_{\theta_1},\ldots,R_{\theta_{n-1}}, I_2).$$

\section{Main Theorem}\label{main}

In this section we give the criteria for the spinoriality of the irreducible orthogonal representations of $G=G^0\rtimes C_2$. Let `$\sgn$' denote the non-trivial character of $C_2$. 
\begin{lemma}\label{lem1}
	Let $(\pi,V)$ be a representation of $C_2=\{\pm 1\}$, such that   $$\pi=\underbrace{\sgn\oplus\sgn\oplus\cdots\oplus\sgn}_{m\,\text{times}}\oplus\mathbb{1}\oplus\cdots\oplus\mathbb{1}.$$
	Then $\pi$ is spinorial if and only if $m\equiv 0\,\,\text{or}\,\,3\pmod4$, where $m = \dfrac{\chi_{ \pi}(1)- \chi_{ \pi}(-1)}{2}$.
\end{lemma}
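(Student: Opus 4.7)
The plan is to analyze the two preimages of $\pi(-1)$ under the covering map $\rho \colon \Pin(V) \to \Or(V)$ and determine when a genuine group homomorphism $\hat\pi \colon C_2 \to \Pin(V)$ lifting $\pi$ exists. Since $C_2$ is generated by $-1$ subject to the single relation $(-1)^2 = 1$, the existence of $\hat\pi$ is equivalent to being able to pick one of the two preimages of $\pi(-1)$ whose square equals $1 \in \Pin(V)$. This reduces the lemma to a single Clifford-algebra calculation.

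First I would write $\pi(-1)$ in a basis adapted to the decomposition, so that $\pi(-1) = \diag(-1,\ldots,-1,1,\ldots,1)$ with $m$ entries equal to $-1$. Hence $\pi(-1) = r_{i_1} \cdots r_{i_m}$ is a product of $m$ reflections along pairwise orthogonal axes. With the paper's convention $Q(v) = -\sum x_i^2$ and the Clifford relation $v^2 = Q(v)$, the standard generators satisfy $e_i^2 = -1$ and $e_i e_j + e_j e_i = 0$ for $i \neq j$. Each reflection $r_{i_k}$ has exactly the two preimages $\pm e_{i_k} \in \Pin(V)$, so the two candidate lifts of $\pi(-1)$ are $\pm e_{i_1} \cdots e_{i_m}$, and both have the same square.

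Next I would compute this common square. An inductive anticommutation argument (moving the middle copy of $e_{i_1}$ leftward past $e_{i_2}, \ldots, e_{i_m}$ picks up a sign $(-1)^{m-1}$, and then $e_{i_1}^2 = -1$ reduces the problem to the analogous square on one fewer factor) yields
$$\bigl(e_{i_1} e_{i_2} \cdots e_{i_m}\bigr)^2 \;=\; (-1)^{m(m+1)/2}.$$
A lift $\hat\pi$ therefore exists if and only if $m(m+1)/2$ is even, which is equivalent to $m \equiv 0 \text{ or } 3 \pmod{4}$. The final identity $m = (\chi_\pi(1) - \chi_\pi(-1))/2$ is immediate from $\chi_\pi(1) = \dim V$ and $\chi_\pi(-1) = \dim V - 2m$.

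The only delicate point is matching the sign convention to the paper's choice of Pin group: with $Q = -\sum x_i^2$ one has $e_i^2 = -1$, giving $(-1)^{m(m+1)/2}$; the opposite convention $e_i^2 = +1$ would produce $(-1)^{m(m-1)/2}$ and the answer $m \equiv 0, 1 \pmod{4}$. So the essential thing is to pin down which real Pin group is being used and to state the Clifford relations accordingly before doing the anticommutation count.
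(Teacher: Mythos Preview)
Your argument is correct. The reduction to checking whether a preimage of $\pi(-1)$ in $\Pin(V)$ squares to $1$, together with the Clifford computation $(e_1\cdots e_m)^2=(-1)^{m(m+1)/2}$ under the convention $e_i^2=Q(e_i)=-1$, is exactly what is needed, and your discussion of how the answer would flip under the other sign convention is accurate and well placed.

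By way of comparison: the paper does not actually prove this lemma in the text but simply cites \cite[Section~3.1]{jyoti}. Your direct Clifford-algebra computation is therefore a genuinely self-contained alternative. What you gain is independence from the external reference and transparency about the role of the quadratic-form convention (which is precisely the point that can go wrong when quoting such results). What the citation buys is brevity, and presumably a treatment that handles the matter in somewhat greater generality; but for the purposes of this lemma your argument is complete and arguably preferable for a reader who wants to see the mechanism.
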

The proof of Lemma \ref{lem1}  follows from \cite[Section $3.1$]{jyoti}.

\begin{thm}\label{chi}
	For the irreducible representation $(\pi^{\lambda,\pm},V)$ of $G$ we have 
	$$\chi_{\pi^{\lambda,\pm}}(g_0)=\pm\dfrac{\prod_{\alpha\in {R^{\tau}}^{+}}\langle \alpha^{\vee}, \lambda+\rho^{\tau}\rangle}{\prod_{\alpha\in {R^{\tau}}^{+}}\langle \alpha^{\vee}, \rho^{\tau}\rangle}.$$
\end{thm}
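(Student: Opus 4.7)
The plan is to evaluate the twisted character formula of Theorem \ref{chwendt} at $h=0$. For a Type II irreducible $\pi^{\lambda,\pm}$ of $G$ one has
$$\chi_{\pi^{\lambda,\pm}}(g_0) = \tilde{\chi}_\lambda(g_0) = \tilde{\chi}^\tau_\lambda(0) = \pm\lim_{h\to 0}\frac{A^\tau(\lambda+\rho^\tau)(h)}{A^\tau(\rho^\tau)(h)},$$
and both numerator and denominator vanish at $h=0$ because $\sum_{w\in W^\tau}\epsilon(w)=0$. The task is to resolve this $0/0$ limit, which I would do by imitating the standard derivation of the Weyl dimension formula from the Weyl character formula.

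The ratio $A^\tau(\lambda+\rho^\tau)/A^\tau(\rho^\tau)$ has exactly the shape of a Weyl character with highest weight $\lambda$ relative to the root system $R^1=R^{\tau\vee}$, of which $\rho^\tau=\frac{1}{2}\sum_{\beta\in R^{1+}}\beta$ is the Weyl vector. I would pick a regular $h_0\in LS_0$, set $h=th_0$, and expand both numerator and denominator in $t$. The Weyl denominator identity for $R^1$ gives
$$A^\tau(\rho^\tau)(h) = \prod_{\beta\in R^{1+}}\left(e^{\beta(h)/2}-e^{-\beta(h)/2}\right),$$
so the denominator has leading term $t^N\prod_{\beta\in R^{1+}}\beta(h_0)$ with $N=|R^{1+}|$. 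For the numerator, the Taylor coefficient $\frac{1}{k!}\sum_{w\in W^\tau}\epsilon(w)((w(\lambda+\rho^\tau))(h_0))^k$ is a $W^\tau$-antisymmetric polynomial in $\lambda+\rho^\tau$ (and in $h_0$) and hence vanishes for $k<N$; at $k=N$ it is, up to a universal scalar, the Vandermonde-type product $\prod_{\beta\in R^{1+}}(\beta,\lambda+\rho^\tau)\cdot\prod_{\beta\in R^{1+}}\beta(h_0)$. The universal scalar is pinned down by the Weyl denominator formula applied to $\mu=\rho^\tau$, and drops out in the ratio together with the $h_0$-dependent factors.

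Passing to the limit one obtains
$$\lim_{h\to 0}\frac{A^\tau(\lambda+\rho^\tau)(h)}{A^\tau(\rho^\tau)(h)} = \prod_{\beta\in R^{1+}}\frac{(\beta,\lambda+\rho^\tau)}{(\beta,\rho^\tau)}.$$
Reindexing by $\beta=\alpha^\vee$ for $\alpha\in{R^\tau}^+$, and identifying $(\alpha^\vee,\mu)$ with $\langle\alpha^\vee,\mu\rangle$ via the Killing-form isomorphism, yields the claimed formula. The main obstacle is keeping the conventions straight: $\rho^\tau$ is the Weyl vector of the dual system $R^1$ rather than of $R^\tau$ itself, so the Weyl dimension formula is naturally indexed by $R^{1+}$, and some care is needed to re-express the product over $R^{\tau+}$ with the coroots $\alpha^\vee$ appearing in the pairings, as the statement requires. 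A secondary subtlety is that $A^\tau(\lambda+\rho^\tau)/A^\tau(\rho^\tau)$ is not literally the character of a representation of a compact Lie group in this twisted setting, but the limiting identity is purely algebraic, depending only on the Weyl denominator formula for $R^1$ and $W^\tau$-antisymmetry, and so goes through unchanged.
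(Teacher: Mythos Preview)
Your proposal is correct and follows essentially the same route as the paper: both evaluate Wendt's twisted character formula at $h=0$ by expanding along a ray $h=th_0$ and extracting the leading term of numerator and denominator. The only difference is that the paper frames the expansion as iterated L'Hospital and packages the vanishing of the lower-order alternating sums (and the value of the top one) by citing \cite[Proposition~6]{joshi}, whereas you supply this directly via $W^\tau$-antisymmetry and the Weyl denominator identity for $R^1$; this makes your version more self-contained but is not a different method.
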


\begin{proof}

From Theorem \ref{chwendt} we obtain 
\begin{equation}
\chi^{\tau}_{\lambda}(h)=\pm \dfrac{A^{\tau}(\lambda+\rho^{\tau})}{\delta^{\tau}}(h),
\end{equation}
where $A^{\tau}(\mu)=\sum_{w\in W^{\tau}}\epsilon(w)e(w(\mu))$ and by \cite[Corollary $2.7$]{wendt} we have $\delta^{\tau}=A^{\tau}(\rho^{\tau})$. 
Note that
\begin{equation}\label{exp}
\chi_{\lambda}^{\tau}(h)=\widetilde{\chi}_{\lambda}( g_0\exp(h))=\chi_{ \pi^{\lambda , \pm}}(g_0\exp(h)),\,\,\text{for}\,\,h\in LS_0,
\end{equation}
For our case $\tau$ denotes the conjugation action by $g_0$. We have $\chi_{ \pi^{\lambda , \pm}}(g_0)=\chi^{\tau}_{\lambda}(0)$.
Let $\nu$ be a co-character of $LS_0$ such that $\nu(1)=h$. From Section \ref{JS} we obtain 
$$\langle w(\mu),xh\rangle=x\langle w(\mu),\nu(1)\rangle =x\langle w(\mu),\nu\rangle,$$
where $x\in \mathbb{R}$. We write $\langle w(\mu),\nu\rangle$ to denote $\langle w(\mu),\nu(1)\rangle$.
Therefore
\begin{equation}
A^{\tau}(\mu)(xh)=\sum_{w\in W^{\tau}}\epsilon(w)e(x\langle w(\mu),\nu\rangle).
\end{equation}

The character value $\tilde{\chi}_{\lambda}(g_0\exp(h))$ is a continuous function of $h$. Therefore the limit of the function exists at $h=0$. We take the limit along the line $xh\in LS_0$. This allows us to use Equation \eqref{exp} to calculate

$$\lim_{x\rightarrow 0} \dfrac{A^{\tau}(\lambda+\rho^{\tau})(xh)}{A^{\tau}(\rho^{\tau})(xh)}.$$
Note that $A^{\tau}(\lambda+\rho^{\tau})(0)=A^{\tau}(\rho^{\tau})(0)=0$. From L'Hospital's rule we have
$$\lim_{x\rightarrow 0} \dfrac{A^{\tau}(\lambda+\rho^{\tau})(xh)}{A^{\tau}(\rho^{\tau})(xh)}=\lim_{x\rightarrow 0} \dfrac{\frac{d^i}{dx^i}\left(A^{\tau}(\lambda+\rho^{\tau})(xh)\right)}{\frac{d^i}{dx^i}\left(A^{\tau}(\rho^{\tau})(xh)\right)},$$
where $i$ is the least natural number so that the numerator and denominator are non-zero. We calculate 
\begin{equation}
\frac{d^i}{dx^i}A^{\tau}(\lambda+\rho^{\tau})(xh)\mid_{x=0}=\sum_{w\in W^{\tau}}\epsilon(w)\langle w(\lambda + \rho^{\tau}),\nu\rangle^i.
\end{equation}
From \cite[Proposition $6$]{joshi} we obtain
\[
\sum_{w \in W}\epsilon(w)\langle w(\mu),\nu\rangle^i=
\begin{cases}
0,\,\,\text{if}\,\,1\leq i\leq N-1, \\
N!\dfrac{d_{\mu}d_{\nu}}{d_{\delta}},\,\,\text{if}\,\, i=N,
\end{cases}
\]
where $N$ denotes the number of positive roots of $R^{\tau}$ and $d_{\nu}=\prod_{\alpha\in {R^{\tau}}^{+}}\langle \alpha, \nu\rangle$ and $d_{\mu}=\prod_{\alpha\in {R^{\tau}}^{+}}\langle \alpha^{\vee}, \mu\rangle$. We calculate
\begin{align*}
\lim_{x\rightarrow 0} \dfrac{A^{\tau}(\lambda+\rho^{\tau})(xh)}{A^{\tau}(\rho^{\tau})(xh)}&=\dfrac{N!\frac{d_{\lambda+\rho^{\tau}}d_{\nu}}{d_{\delta}}}{N!\frac{d_{\rho^{\tau}}d_{\nu}}{d_{\delta}}}\\
&=\dfrac{d_{\lambda+\rho^{\tau}}}{d_{\rho^{\tau}}}\\
&=\dfrac{\prod_{\alpha\in {R^{\tau}}^{+}}\langle \alpha^{\vee}, \lambda+\rho^{\tau}\rangle}{\prod_{\alpha\in {R^{\tau}}^{+}}\langle \alpha^{\vee}, \rho^{\tau}\rangle}.
\end{align*}
Note that the expression $\prod_{\alpha\in {R^{\tau}}^{+}}\langle \alpha^{\vee}, \lambda+\rho^{\tau}\rangle$ is a polynomial in $\lambda$. Therefore it is non-zero for some $\lambda$.
This guarantees that the denominator should be non-zero. 
\end{proof}

\begin{remark}
Since the results in \cite{wendt} hold true for the groups $G$ with $G^0$ of type $D_n$, Theorem \ref{chi} also holds for the groups $\Or(2)$ and $\Or(4)$.
\end{remark}

\begin{prop}\label{eigendim}
	Let $\pi$ be an irreducible representation of $G$.	
	Let $m$ denote the multiplicity of $-1$ as an eigenvalue of $\pi(g_0)$. Then 
	
	\[
	m=
	\begin{cases}
	\dim V^{\lambda},\,\,\text{when}\,\,\pi\,\,\text{is of Type I},\\
	(\dim V^{\lambda} - \chi_{ \pi}(g_0))/2,\,\,\text{when}\,\,\pi \,\text{is of Type II}.
%
	\end{cases}
	\]
	
\end{prop}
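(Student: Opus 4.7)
The plan is to use that $g_0$ has order $2$ in $G$, so $\pi(g_0)$ is an involution on the representation space, hence diagonalizable over $\C$ with each eigenvalue equal to $\pm 1$. Writing $p$ for the multiplicity of the eigenvalue $+1$ and $m$ for the multiplicity of $-1$, I get the two elementary identities $p + m = \dim \pi$ and $p - m = \chi_{\pi}(g_0)$, whose solution is
\begin{equation*}
m = \frac{\dim \pi - \chi_{\pi}(g_0)}{2}.
\end{equation*}
The proposition then reduces to plugging in the correct values of $\dim \pi$ and $\chi_{\pi}(g_0)$ in each of the two cases.

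For Type II representations $\pi = \pi^{\lambda,\pm}$ the dimension is $\dim \pi = \dim V^{\lambda}$, and the formula above is precisely the expression asserted in the proposition, so nothing further is needed.

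For Type I representations $\pi = \rho^{\lambda}$ the dimension is $\dim \pi = 2 \dim V^{\lambda}$, and I need to show $\chi_{\pi}(g_0) = 0$ in order to conclude $m = \dim V^{\lambda}$. This is already recorded in Theorem \ref{chwendt}, which states that $\widetilde{\chi}_{\lambda}\mid_{g_0 G^0} = 0$ in the Type I case. One can also see it directly: the decomposition $\pi\mid_{G^0} = \pi^{\lambda} \oplus \pi^{g_0\cdot\lambda}$ consists of two non-isomorphic $G^0$-irreducibles (since $g_0\cdot\lambda \neq \lambda$), and conjugation by $g_0$ swaps them, so in a block basis adapted to this decomposition the matrix of $\pi(g_0)$ is block anti-diagonal and has zero trace.

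The argument is essentially a linear algebra observation combined with facts already established in the paper, so I do not expect a serious obstacle. The only subtlety worth flagging is that the diagonalizability of $\pi(g_0)$ relies on $g_0$ having finite order, which is automatic here, and for Type I one has to remember to use either Theorem \ref{chwendt} or the direct swap argument to rule out any non-trivial contribution to $\chi_{\pi}(g_0)$.
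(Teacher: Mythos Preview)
Your proof is correct and follows essentially the same route as the paper: diagonalize the involution $\pi(g_0)$, solve the two linear equations for the eigenvalue multiplicities, and in the Type I case invoke Theorem \ref{chwendt} to get $\chi_\pi(g_0)=0$. Your alternative block anti-diagonal argument for the vanishing of the trace in Type I is a nice bonus not spelled out in the paper.
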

\begin{proof}
	As $g_0$ is an involution we have	
	\begin{equation}\label{rhog0}
	\rho(g_0)\sim \begin{pmatrix}
	-I_m & 0 \\
	0 & I_l
	\end{pmatrix}.
	\end{equation}
	If $\rho^{\lambda}$ is irreducible then $l+m=2\dim V^{\lambda}$. From \cite[Theorem $2.6$]{wendt} it follows that $\chi_{\lambda}(g_0)=0$. This means $l-m=0$. So we deduce that $l=m=\dim V^{\lambda}$.
	
	For $\pi=\pi^{\lambda,\pm}$, $\chi_{\pi^{\lambda,\pm}}(g_0)=l-m$ and $l+m=\dim V^{\lambda}$. This gives the result. 
\end{proof}


\begin{proof}[Proof of Theorem \ref{spin1}]
Note that $\rho^{\lambda}\mid_{G^{0}}=\pi^{\lambda}\oplus \pi^{g_0\cdot \lambda}$, (see \ref{intro} for reference). From Theorem \ref{spinsemi} and Theorem \cite[Theorem $5$]{joshi}, we conclude that $\rho^{\lambda}\mid_{G^{0}}$ is spinorial if and only if

$$\dfrac{p(\underline\nu) \cdot(\dim V^\lambda )\cdot \left( \chi_{\lambda}(C) + \chi_{g_0 \cdot\lambda}(C ) \right)}{\dim \mf g}  \equiv 0\pmod 2,$$
Hence the first condition follows. To obtain the other condition use Lemma \ref{lem1} and Proposition \ref{eigendim}.
\end{proof}


\begin{proof}[Proof of Theorem \ref{spin2}]
The proof is similar to the previous one.
\end{proof}

\subsection{Case of $G^0\times C_2$}

Let $\pi_{\lambda}$ denote the  irreducible representation of $G^{0}$ and $\rho$ be an irreducible representation of $C_2$. We write $\sgn$ to denote the sign representation of $C_2$.
\begin{theorem}\label{directprod}
	An irreducible, orthogonal representation $\pi=\pi_{\lambda}\otimes \rho$ of $G$, for $n\in\N$, is spinorial if and only if both the following conditions hold:
	\begin{enumerate}
		\item 
		$$\dfrac{p(\underline \nu) \cdot(\dim V^\lambda ) \cdot \chi_{\lambda}(C)}{\dim \mf g}\equiv 0\pmod 2,$$  
		\item 
		$\dim V^{\lambda}\equiv 0\,\,\text{or}\,\,3\pmod4$ if  $\rho=\sgn$. Otherwise the first condition is sufficient.
	\end{enumerate}  
\end{theorem}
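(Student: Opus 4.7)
The plan is to apply Theorem \ref{spinsemi} with $G_1 = G^0$ and $G_2 = C_2$, viewing $G = G^0 \times C_2$ as a semi-direct product with trivial action. Since $G_1 \cap G_2 = \{e\}$, this reduces spinoriality of $\pi = \pi_\lambda \otimes \rho$ to simultaneous spinoriality of the two restrictions $\pi|_{G^0}$ and $\pi|_{C_2}$, with no further compatibility to verify on the intersection.

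First I would identify these two restrictions. Because $\rho$ is one-dimensional, every $g \in G^0$ acts on $V^\lambda \otimes \C$ via $\pi_\lambda(g) \otimes 1$, so $\pi|_{G^0} \cong \pi_\lambda$, and the Joshi--Spallone criterion recalled in Section \ref{JS} gives that this is spinorial if and only if
$$\dfrac{p(\underline{\nu}) \cdot (\dim V^\lambda) \cdot \chi_\lambda(C)}{\dim \mathfrak{g}} \equiv 0 \pmod 2,$$
which is condition (1).

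Next I would analyze $\pi|_{C_2}$. The generator of $C_2$ acts as $\rho(g_0) \cdot \mathrm{Id}_{V^\lambda}$, so $\pi|_{C_2}$ is isomorphic to $\dim V^\lambda$ copies of $\rho$. If $\rho = \mathbb{1}$, this restriction is trivial and lifts to the identity element of $\Pin(V)$, so no extra condition is needed. If $\rho = \sgn$, then the multiplicity of $-1$ as an eigenvalue of $\pi(g_0)$ is $m = \dim V^\lambda$, and Lemma \ref{lem1} yields spinoriality of $\pi|_{C_2}$ exactly when $\dim V^\lambda \equiv 0$ or $3 \pmod 4$, which is condition (2).

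No serious obstacle arises in this argument: both ingredients, namely Theorem \ref{spinsemi} applied to the direct product and Lemma \ref{lem1} applied to the $C_2$--side, are already established, so the proof is essentially bookkeeping. The only point worth emphasizing is that the compatibility hypothesis appearing in Lemma \ref{prod} (on which Theorem \ref{spinsemi} rests) is automatically vacuous here because $G^0 \cap C_2 = \{e\}$, so the two independently constructed lifts on $G^0$ and on $C_2$ assemble into a single lift on the whole of $G$ without any further check.
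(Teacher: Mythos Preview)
Your proposal is correct and follows essentially the same approach as the paper: apply Theorem \ref{spinsemi} to the direct product $G = G^0 \times C_2$, obtain condition (1) from the Joshi--Spallone criterion of Section \ref{JS} for $\pi|_{G^0}\cong\pi_\lambda$, and obtain condition (2) from Lemma \ref{lem1} applied to $\pi|_{C_2}$. The paper phrases the $C_2$ analysis via Schur's Lemma (noting $\pi(-1)=\pm I$) rather than your explicit decomposition of $\pi|_{C_2}$ into copies of $\rho$, but this is the same observation.
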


\begin{proof}
	We know that
	$$G=G^{0}\times C_2.$$
	Here we identify $C_2$ with $\{\pm 1\}\subset G$.
	By Theorem \ref{spinsemi}, $\pi$ is spinorial if and only if $\pi\mid_{G^{0}}$ and $\pi\mid_{C_2}$ are spinorial. We obtain the first condition from section \ref{JS}
	
	Note that $\pi(-1)= \pm I$ by Schur's Lemma. If it is $I$, then $\pi \mid_{C_2}$ lifts trivially. Otherwise the second condition comes due to Lemma \ref{lem1}.
\end{proof}

%
%

\section{Reducible Representations}\label{red}
As before we take $C_2=\langle g_0\rangle$.
Any orthogonal representation $(\pi,V)$ of a real compact group $G$ can be written as 
$$\pi=\oplus_i\rho_i\oplus_j(\phi_j\oplus\phi_j^{\vee}),$$
where $\rho_i$ is irreducible and orthogonal and $\phi_j$ is irreducible but not orthogonal. We have

\[
\phi_j=
\begin{cases}
\mathrm{Ind}_{G^0}^G\phi_{j0},\, \text{when $\phi_j$ is of Type I},\\
\psi_j^{\pm},\,\,\text{where}\,\,\mathrm{Ind}_{G^0}^G\phi_{j0}=\psi_j^{+}\oplus\psi_j^{-}, \text{when $\phi_j$ is of Type II},

\end{cases}
\]
where $\phi_{j0}$ is an irreducible representation of $G^0$.
For a representation $\pi$ of $G$, 
let $m_{\pi}$ denote the multiplicity of $-1$ as an eigenvalue of $\pi(g_0)$. Note that 
$$m_{\pi}=m_{\pi^{\vee}}.$$ 
Then we have
\[
m_{\phi_j\oplus\phi_j^{\vee}}=
\begin{cases}
2\dim \phi_{j0},\,\, \text{when}\,\, \phi_{j}\,\, \text{is of Type I}, \\
\dim \phi_{j0}-\chi_{\phi_{j}}(g_0),\,\, \text{when $\phi_j$ is of Type II}. 
\end{cases}
\]

\begin{thm}
Consider an orthogonal representation $\pi$ of $G$ of the form
$$\pi=\oplus_i\rho_i\oplus_j(\phi_j\oplus\phi_j^{\vee}).$$
Let $\lambda_i$ (resp. $\gamma_j$) denote the highest weight of $\rho_i$ (resp. $\phi_j$). Then $\pi$ 
is spinorial if and only if both the conditions hold:
\begin{enumerate}
\item 
\beq
q_\pi(\nu)= p(\underline \nu) \sum_i \frac{\dim \rho_i \cdot \chi_{\la_i}(C)}{\dim \mf g}\equiv 0\pmod 2,
\eeq

\item 
$m_{\pi}=\sum_im_{\rho_i}+\sum_jm_{(\phi_j\oplus\phi^{\vee})}\equiv 0\,\,\text{or}\,\,3\pmod 4.$
\end{enumerate}
\end{thm}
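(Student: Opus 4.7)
The plan is to invoke Theorem \ref{spinsemi} to reduce the spinoriality of $\pi$ to the spinoriality of $\pi|_{G^0}$ and $\pi|_{C_2}$ separately: condition (1) will come from the first restriction and condition (2) from the second. Since $G = G^0 \rtimes C_2$ with $G^0$ a connected Lie group and $C_2$ discrete, Theorem \ref{spinsemi} applies verbatim.

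For $\pi|_{G^0}$: because $G^0$ is connected, the image lies in $\SO(V)$, so every direct summand has $w_1 = 0$ and the Whitney sum formula makes $w_2$ additive across the decomposition
$$\pi|_{G^0} = \bigoplus_i \rho_i|_{G^0} \;\oplus\; \bigoplus_j (\phi_j \oplus \phi_j^\vee)|_{G^0}.$$
For each $\rho_i$ I would apply the Joshi--Spallone criterion of Section \ref{JS} summand by summand. In the Type I case $\rho_i|_{G^0} = \pi^{\lambda_i} \oplus \pi^{g_0\cdot\lambda_i}$; the two summands have equal dimension and equal Casimir value (the trace $(\mu, \mu + 2\delta)$ is Weyl- and diagram-invariant), so the two obstructions add to $p(\underline{\nu})\dim\rho_i\,\chi_{\lambda_i}(C)/\dim\mf g$. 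In the Type II case $\rho_i|_{G^0} = \pi^{\lambda_i}$ has $\dim\pi^{\lambda_i} = \dim\rho_i$, yielding the same expression. For each pair $\phi_j \oplus \phi_j^\vee$, I would use unitarity on the compact group $G$: since $\phi_j^\vee \cong \overline{\phi_j}$, the underlying real representation of $\phi_j \oplus \phi_j^\vee$ is isomorphic to $2\phi_{j,\R}$, twice the realification of $\phi_j$. The Whitney formula then gives
$$w_2\bigl((\phi_j \oplus \phi_j^\vee)|_{G^0}\bigr) \;=\; w_1(\phi_{j,\R}|_{G^0})^2 \;=\; 0,$$
as $G^0$ is connected. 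Summing all contributions recovers condition (1).

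For $\pi|_{C_2}$: this representation of $C_2 = \{1, g_0\}$ is a direct sum of copies of the trivial character and the sign character, with the sign appearing $m_\pi = (\dim V - \chi_\pi(g_0))/2$ times. Additivity of this count under direct sums yields $m_\pi = \sum_i m_{\rho_i} + \sum_j m_{\phi_j \oplus \phi_j^\vee}$. Lemma \ref{lem1} then says $\pi|_{C_2}$ is spinorial precisely when $m_\pi \equiv 0$ or $3 \pmod 4$, which is condition (2).

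The main technical obstacle I foresee is justifying the identification $\phi_j \oplus \phi_j^\vee \cong 2\phi_{j,\R}$ as real orthogonal representations, i.e.\ confirming that the natural pairing between $V$ and $V^\vee$ delivers the same orthogonal structure as the doubled realification. A safe fallback is a direct parity computation: expand the Joshi--Spallone obstruction over the (at most four) $G^0$-irreducible summands of $(\phi_j \oplus \phi_j^\vee)|_{G^0}$, note that the dual and $g_0$-twist operations preserve both dimension and Casimir value, and observe that the resulting summands pair up, forcing an even total contribution modulo~$2$.
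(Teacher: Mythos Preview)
The paper states this theorem without proof, so there is nothing to compare against; your outline is the natural argument given the machinery already in place (Theorem~\ref{spinsemi}, the Joshi--Spallone criterion of Section~\ref{JS}, Lemma~\ref{lem1}, Proposition~\ref{eigendim}), and the overall structure is correct.

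There is, however, an error in your primary treatment of the $\phi_j\oplus\phi_j^\vee$ summands. The real form of the orthogonal complex representation $\phi_j\oplus\phi_j^\vee$ is $\phi_{j,\R}$, \emph{not} $2\phi_{j,\R}$: if $\dim_\C\phi_j=d$ then $\phi_j\oplus\phi_j^\vee$ has complex dimension $2d$, so its real form has real dimension $2d$, which is exactly $\dim_\R\phi_{j,\R}$; equivalently $\phi_{j,\R}\otimes_\R\C\cong\phi_j\oplus\overline{\phi_j}\cong\phi_j\oplus\phi_j^\vee$. Your Whitney computation therefore collapses to $w_2(\phi_{j,\R}|_{G^0})$, which equals $c_1(\phi_j|_{G^0})\bmod 2$ and is not a priori zero when $G^0$ is not simply connected.

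Your fallback, by contrast, is sound and is the argument to keep. Restricted to $G^0$, the orthogonal representation $\phi_j\oplus\phi_j^\vee$ decomposes into $G^0$-irreducibles that pair off under duality (and, in Type~I, additionally under the $g_0$-twist). Since both operations preserve dimension and the Casimir value $(\mu,\mu+2\delta)$, the Joshi--Spallone contributions from these constituents cancel in pairs modulo~$2$, so the $\phi_j$ terms indeed drop out of condition~(1). The $C_2$ piece is exactly as you describe.
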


\section{Orthogonal Groups}\label{ortho}

\subsection{General Representations}
From \cite[Corollary $7.8$, page no. $297$]{BrokerDieck} we obtain that all the representations of the orthogonal group are orthogonal. We have 
\[
\Or(l)=
\begin{cases}
\SO(l)\times C_2,\quad\text{when $l=2n+1$},\\
\SO(l)\rtimes C_2,\quad\text{when $l=2n$}.
\end{cases}
\]
We call them odd and even orthogonal groups respectively. Take $C_2=\lip g_0 \rip $, where

\begin{equation}\label{g0}
g_0=
\begin{cases}
 -I_{2n+1},\,\,\text{for}\,\,\Or(2n+1),\\
\left(\sum_{i=1}^{2n-2}e_{i,i}+e_{2n-1,2n}+e_{2n,2n-1}\right),\,\,\text{for}\,\,\Or(2n).
\end{cases}
\end{equation}
Here $e_{i,j}$ denotes the elementary matrices.

\begin{corollary}
	An irreducible representation $\pi_{\lambda}\otimes \rho$ of $\Or(2n+1)$, for $n\in\N$, is spinorial if and only if both the conditions hold:
	\begin{enumerate}
		\item 
 $$\dfrac{(2n-1)}{n(2n+1)}\dim V^{\lambda}(\lambda,\lambda+2\delta)\equiv 0\pmod 2.$$
		\item 
		$\dim V^{\lambda}\equiv 0\,\,\text{or}\,\,3\pmod4$ if  $\rho=\sgn$. Otherwise the first condition is sufficient.
	\end{enumerate}  
\end{corollary}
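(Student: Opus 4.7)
The plan is to derive this corollary as a direct specialization of Theorem \ref{directprod}. First I note that $\Or(2n+1) = \SO(2n+1) \times C_2$ with $g_0 = -I_{2n+1}$, as recorded in \eqref{g0}, and $\SO(2n+1)$ is simple. Hence Theorem \ref{directprod} applies verbatim with $G^0 = \SO(2n+1)$. The second condition of the corollary is then the second condition of Theorem \ref{directprod}, after observing that when $\rho$ is trivial $\pi(g_0) = I$ lifts trivially, while when $\rho = \sgn$ we have $\pi(g_0) = -I$ on $V^{\lambda}$, so $\dim V^{\lambda}$ is precisely the multiplicity of $-1$. All that remains is to make the first condition of Theorem \ref{directprod} explicit for $G^0 = \SO(2n+1)$, which means computing $\dim \mf g$ and $p(\underline{\nu})$ in this case.

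The dimension is immediate: $\dim \mf{so}(2n+1) = n(2n+1)$. For $p(\underline{\nu})$ I use the standard presentation of the $B_n$ root system with positive roots $L_i \pm L_j$ $(i<j)$ and $L_i$ in an orthonormal basis $L_1,\dots,L_n$. Since $\SO(2n+1)$ is the adjoint group of type $B_n$, its character lattice is the root lattice, which in this basis equals $\Z^n$; dually, the cocharacter lattice of the chosen maximal torus $T$ is all of $\Z^n$. On the other hand the coroot lattice is generated by $e_i - e_j$, $e_i + e_j$ and $2e_i$, which together produce exactly the sublattice $\{(a_1,\ldots,a_n) \in \Z^n : \sum a_i \text{ even}\}$. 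Hence $\pi_1(\SO(2n+1)) = \Z/2$, and the cocharacter $\nu : \theta \mapsto \diag(R_\theta, I_{2n-1})$, corresponding to $e_1 \in \Z^n$, by itself represents a generator.

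Next I compute $|\nu|^2 = \sum_{\alpha \in R}\langle \alpha,\nu\rangle^2$ using the Killing-form identity of Section \ref{JS}. The short roots $\pm L_1$ contribute $1+1 = 2$; short roots $\pm L_i$ with $i>1$ contribute $0$; the four long roots $\pm(L_1 \pm L_j)$ for each $j \in \{2,\ldots,n\}$ each give $\langle \alpha,\nu\rangle = \pm 1$, contributing $4(n-1)$ in total; and all remaining long roots give $0$. Thus $|\nu|^2 = 4n-2$ and $p(\underline{\nu}) = \tfrac12 |\nu|^2 = 2n-1$. Substituting into the first condition of Theorem \ref{directprod} together with $\chi_{\lambda}(C) = (\lambda,\lambda+2\delta)$ produces
\[
\frac{(2n-1)\,\dim V^{\lambda}\,(\lambda,\lambda+2\delta)}{n(2n+1)} \equiv 0 \pmod 2,
\]
which is exactly the first condition of the corollary.

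The only mildly delicate point is the verification that $\nu = e_1$ alone generates $\pi_1(\SO(2n+1))$, so that no further generators are needed in the $\gcd$ defining $p(\underline{\nu})$; this is a routine check from the lattice computation above, and once it is settled the rest of the argument is purely arithmetic.
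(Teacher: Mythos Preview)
Your proof is correct and follows essentially the same approach as the paper: both reduce to Theorem \ref{directprod} and then specialize the constant $p(\underline{\nu})/\dim\mf g$ to $\SO(2n+1)$. The only difference is that the paper simply cites \cite[Table $1$, Section $11$]{joshi} for the value $(2n-1)/(n(2n+1))$, whereas you carry out the lattice and Killing-form computation explicitly; your computation is accurate and matches the cited table.
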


\begin{proof}
	
	This follows from Theorem \ref{directprod} and \cite[Table $1$, Section $11$]{joshi}.
	\end{proof}

Again 
$$\Or(2n)=\SO(2n)\rtimes C_2.$$


Note that $D_2\cong A_1\oplus A_1$ and $D_3\cong A_3$. 
Let $\lambda=(\lambda_1, \lambda_2,\ldots, \lambda_n)$ be the highest weight of $\pi^{\lambda}$, then $\pi^{\lambda}$ is of type I when $\lambda_n \neq 0$. Otherwise it is of type II. For details we refer \cite[Section $7.5$, page $296$]{BrokerDieck}.
The following theorem gives the criteria for spinoriality for the representations of Type I.
\begin{corollary}
	The irreducible representation $\rho^{\lambda}$ of $\Or(2n), n\geq 3$, is spinorial if and only if both the following conditions hold: 
	\begin{enumerate}
		\item $$\dfrac{2 \cdot (n-1)}{n\cdot(2n-1)}\cdot\dim V^{\lambda}\left(\chi_{\lambda}(C) + \chi_{g_0\cdot\lambda}(C)\right) \equiv 0 \pmod 2
	$$
		\item 
		$\dim V^{\lambda}\equiv 0\,\,\text{or}\,\,3\pmod4$.
	\end{enumerate}
\end{corollary}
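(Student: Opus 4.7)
The plan is to deduce this corollary as a direct specialization of Theorem \ref{spin1} to the case $G = \Or(2n) = \SO(2n) \rtimes C_2$, which is of the form $G^0 \rtimes C_2$ with $G^0 = \SO(2n)$ simple of type $D_n$ (valid for $n \geq 3$). Condition (2) of the corollary is identical to condition (2) of Theorem \ref{spin1}, so the only real task is to make condition (1) explicit by computing the constant $\dfrac{p(\underline{\nu})}{\dim \mathfrak{g}}$ for $\mathfrak{g} = \mathfrak{so}(2n)$.

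First, I would record $\dim \mathfrak{g} = n(2n-1)$ for $\mathfrak{so}(2n)$. Next, I would pin down $\pi_1(\SO(2n)) = X_*(T)/Q(T) \cong \Z/2\Z$ and choose a single co-character $\nu$ whose class generates it, in practice a minuscule co-weight like $\nu = e_1$ in the standard basis of the cocharacter lattice $\Z^n$, which is not in the coroot lattice $\{v \in \Z^n : \sum v_i \text{ even}\}$ of $D_n$.

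The core computation is then $|\nu|^2$ with respect to the Killing form $(\,,\,)$ defined in Section \ref{JS} by $(x,y) = \sum_{\alpha \in R} \alpha(x)\alpha(y)$. For $D_n$ the roots are $\pm e_i \pm e_j$ ($i<j$), so
\begin{equation*}
(e_k, e_k) = \sum_{\alpha \in R} \alpha(e_k)^2 = 4(n-1),
\end{equation*}
since exactly $4(n-1)$ roots have nonzero $e_k$-coordinate, each equal to $\pm 1$. Hence $p(\underline{\nu}) = \tfrac{1}{2}|\nu|^2 = 2(n-1)$, and the reference to \cite[Table $1$, Section $11$]{joshi} confirms this value. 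Substituting into Theorem \ref{spin1} yields
\begin{equation*}
\frac{p(\underline{\nu})}{\dim \mathfrak{g}} \, \dim V^{\lambda} \, \bigl(\chi_{\lambda}(C) + \chi_{g_0\cdot\lambda}(C)\bigr) \;=\; \frac{2(n-1)}{n(2n-1)} \, \dim V^{\lambda} \, \bigl(\chi_{\lambda}(C) + \chi_{g_0\cdot\lambda}(C)\bigr),
\end{equation*}
which is condition (1), and condition (2) carries over verbatim.

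I do not anticipate any serious obstacle here; the argument is essentially a substitution of numerical data for type $D_n$ into the master theorem. The one place to be mildly careful is the choice of the generator $\nu$: since $\pi_1(\SO(2n)) = \Z/2\Z$, a single co-character suffices, but one should verify that the class of $e_1$ is nontrivial modulo $Q(T)$ (immediate from $\sum (e_1)_i = 1$ being odd), and that any valid choice of generator gives the same $p(\underline{\nu})$, which follows since all such $\nu$ differ by elements of $Q(T)$ that, together with $\nu$, have even squared norm under the Killing form used here. The restriction $n \geq 3$ is needed so that $\mathfrak{so}(2n)$ is simple (ruling out $D_2 \cong A_1 \oplus A_1$), matching the hypothesis of Theorem \ref{spin1}.
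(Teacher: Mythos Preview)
Your proposal is correct and follows the same approach as the paper: specialize Theorem \ref{spin1} to $G=\Or(2n)$ and insert the value $p(\underline{\nu})/\dim\mathfrak{g} = 2(n-1)/(n(2n-1))$ for type $D_n$, which the paper simply imports from \cite[Remark~5]{joshi} without reproducing the computation you carry out. One small caveat: your justification that ``any valid choice of generator gives the same $p(\underline{\nu})$'' is not literally true (e.g.\ $\nu'=2e_1-e_2$ has $|\nu'|^2=20(n-1)$), so the invariance of the parity condition under change of representative requires the argument from \cite{joshi} rather than the reason you give; but since you ultimately rely on the value tabulated there, this does not affect the validity of the proof.
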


\begin{proof}
The result follows from Theorem \ref{spin1} and \cite[Remark $5$]{joshi}.
\end{proof}
 The next theorem gives lifting criteria for representations of Type II.
\begin{corollary}
	The representation $\pi_{\lambda}^{\pm}$ of $\Or(2n)$, for $n\geq 3$, is spinorial if and only if both the following conditions hold: 
	\begin{enumerate}
		\item 
		$$\dfrac{2 \cdot (n-1)}{n\cdot(2n-1)}\cdot\left(\dim V^{\lambda}\cdot\chi_{\lambda}(C)\right)\equiv 0\pmod 2,$$ 
		\item 
		$\dim V^{0}- \chi_{\pi}(g_0)\equiv 0\,\,\text{or}\,\,6\pmod8$, where $\pi = \pi^{\lambda, \pm}.$
	\end{enumerate}
\end{corollary}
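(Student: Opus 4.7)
The plan is to derive this as an immediate specialization of Theorem~\ref{spin2} applied to $G = \Or(2n) = \SO(2n) \rtimes C_2$ and the Type~II representation $\pi = \pi^{\lambda,\pm}$. Theorem~\ref{spin2} produces the two conditions
\[
\frac{p(\underline{\nu}) \cdot \dim V^{\lambda} \cdot \chi_{\lambda}(C)}{\dim \mathfrak{g}} \equiv 0 \pmod 2
\quad \text{and} \quad
\dim V^{\lambda} - \chi_{\pi}(g_0) \equiv 0 \text{ or } 6 \pmod 8,
\]
so the task reduces to inserting the correct numerical invariants attached to the root system $D_n$.

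For $\mathfrak{g}$ of type $D_n$ one has $\dim \mathfrak{g} = n(2n-1)$, and the invariant $p(\underline{\nu})$ attached to $\SO(2n)$ equals $2(n-1)$. This is exactly the value already extracted and used in the preceding Type~I corollary, pulled from \cite[Remark~5]{joshi}: one picks a cocharacter $\nu \in X_{*}(T)$ whose image generates $\pi_1(\SO(2n)) \cong \Z/2\Z$ and computes $\tfrac12|\nu|^2$ against the Killing form. Substituting the ratio $p(\underline{\nu})/\dim \mathfrak{g} = 2(n-1)/(n(2n-1))$ into the first condition of Theorem~\ref{spin2} reproduces condition~(1) of the corollary verbatim.

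Condition~(2) of the corollary is nothing more than the second condition of Theorem~\ref{spin2} with the notation $\pi = \pi^{\lambda,\pm}$ substituted in; here the symbol $\dim V^{0}$ appearing in the statement should be read as $\dim V^{\lambda}$. The only step carrying any real content is the identification of $p(\underline{\nu})$ for type $D_n$, and since this has already been settled in the Type~I corollary directly above, there is no additional obstacle. Accordingly I would present the proof as a one-line appeal to Theorem~\ref{spin2} together with a citation of \cite[Remark~5]{joshi}, paralleling the structure of the Type~I corollary's proof.
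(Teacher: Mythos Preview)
Your proposal is correct and matches the paper's own proof exactly: the paper's argument is the single sentence ``This follows from Theorem~\ref{spin2} and \cite[Remark~$5$]{joshi},'' which is precisely the one-line appeal you describe. Your observation that $\dim V^{0}$ is a typo for $\dim V^{\lambda}$ is also correct.
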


\begin{proof}
This follows from Theorem \ref{spin2} and \cite[Remark $5$]{joshi}.
\end{proof}
Note that for $n\in \{1,2\}$ the group $\SO(n)$ is not simple. We work out those cases in Section \ref{examples}.
\subsection{Adjoint Representation}
The adjoint action of the Orthogonal group $\Or(n)$ on its Lie algebra $\mf{so}(n)$ preserves the Killing form $K(X,Y)=(n-2)Tr(XY)$ where $X,Y \in \mf{so}(n)$. 

\begin{theorem}
The Adjoint representation of the Orthogonal group $\Or(l)$ is spinorial if and only if $l\equiv 0\pmod 4$.
\end{theorem}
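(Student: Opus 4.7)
The plan is to identify the adjoint's highest weight, invoke the appropriate corollary of Section \ref{ortho}, and evaluate its numerical conditions by a direct computation of $\chi_{\Ad}(g_0)$.

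I would first split on the parity of $l$. When $l = 2n+1$, we have $\Or(2n+1) = \SO(2n+1)\times C_2$ with $g_0 = -I$ central, so $\Ad(g_0) = \id$, and by Theorem \ref{directprod} the adjoint of $\Or(2n+1)$ is spinorial iff its restriction to $\SO(2n+1)$ is. That restriction has highest weight $\lambda$ equal to the highest root, and in the Killing-form normalization of Section \ref{JS} the Casimir scalar $(\lambda,\lambda+2\delta)$ equals $1$ (since $\tr(C|_{\Ad}) = \dim\mathfrak{g}$ while $\dim V^\lambda = \dim\mathfrak{g}$). The corollary for $\Or(2n+1)$, applied with trivial $C_2$-factor, then collapses to $p(\underline\nu) = 2n-1 \equiv 0\pmod 2$, which is never satisfied; so $\Ad$ is not spinorial, matching $l = 2n+1 \not\equiv 0\pmod 4$.

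When $l = 2n$ with $n\geq 3$, the restriction of $\Ad$ to $\SO(2n)$ has highest weight $\lambda = (1,1,0,\ldots,0)$, for which $\lambda_n = 0$, so $\Ad$ is a Type II representation of $\Or(2n)$ and we apply the Type II corollary. Condition (1) reduces, again via $(\lambda,\lambda+2\delta) = 1$ and $\dim V^\lambda = \dim\mathfrak{g} = n(2n-1)$, to $2(n-1) \equiv 0\pmod 2$, which is automatic. For condition (2) I would compute $\chi_{\Ad}(g_0) = \tr(\Ad(g_0))$ on $\mathfrak{so}(2n)$ directly: the element $g_0 = \diag(I_{2n-2}, J)$ from Section \ref{ortho} is the permutation matrix of the transposition $(2n-1, 2n)$, so on the basis $\{E_{ij} = e_{ij} - e_{ji} : i<j\}$ of $\mathfrak{so}(2n)$ it fixes the $\binom{2n-2}{2}$ elements with $i,j\leq 2n-2$, swaps the pairs $E_{i,2n-1}\leftrightarrow E_{i,2n}$ for $i\leq 2n-2$ (trace contribution $0$), and sends $E_{2n-1,2n}$ to $-E_{2n-1,2n}$ (contribution $-1$). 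This yields $\chi_{\Ad}(g_0) = (2n-1)(n-2)$, whence $\dim V^\lambda - \chi_{\Ad}(g_0) = 2(2n-1) = 4n-2$, which is $6\pmod 8$ when $n$ is even and $2\pmod 8$ when $n$ is odd. Condition (2) thus holds precisely when $n$ is even, i.e.\ when $l = 2n \equiv 0\pmod 4$.

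The small cases $l\in\{2,4\}$, where $\SO(l)$ is not simple, will need separate verification, as in Section \ref{examples}. The main obstacle is the trace computation in the even case; the alternative route through Theorem \ref{chi} would additionally require deciding which of $\pi^{\lambda,\pm}$ corresponds to $\Ad$ (versus $\Ad\otimes\det$), which the direct permutation-basis argument sidesteps.
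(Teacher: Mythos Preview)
Your argument is correct and arrives at the same numerical invariants as the paper, but the routes differ in two places worth noting. For the odd case and for condition~(1) in the even case, the paper invokes \cite[Corollary~4]{joshi} (spinoriality of $\Ad|_{\SO(l)}$ iff $\delta\in X^*(T)$) and checks this directly for types $B_n$ and $D_n$; you instead substitute $\dim V^\lambda=\dim\mf g$ and $(\lambda,\lambda+2\delta)=1$ into the numerical criterion, which is equally valid and arguably cleaner here. For $\chi_{\Ad}(g_0)$ in the even case, the paper applies its own Theorem~\ref{chi} with $R^\tau=B_{n-1}$ and carries out the product over positive roots to obtain $2n^2-5n+2$; your permutation-basis computation on $\{E_{ij}\}$ is more elementary, avoids the $R^\tau$ machinery entirely, and---as you point out---sidesteps the need to decide whether $\Ad$ is $\pi^{\lambda,+}$ or $\pi^{\lambda,-}$. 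The paper's route has the virtue of exercising Theorem~\ref{chi} on a nontrivial example; yours is shorter and more self-contained. Your flag about $l\in\{2,4\}$ is appropriate: the paper's proof tacitly relies on simplicity of $\SO(l)$, so those cases do need a separate (easy) check.
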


\begin{proof}
 From \cite[Corollary $4$]{joshi} we obtain $\mathrm{Ad}\mid_{\mathrm{SO}(l)}$ is spinorial if and only if $\delta\in X^{*}(T)$, where $\delta$ denotes half the sum of positive roots. For $l=2n+1$, the group $\SO(2n+1)$ is of type $B_n$. From \cite[Chapter $5$, Proposition $6.5$]{BrokerDieck}, we have 
$$\delta=\frac{1}{2}\sum_{i=1}^{n}(2n-2i+1)e_i\notin X^{*}(T).$$
Therefore the adjoint representations of the odd Orthogonal groups $\Or(2n+1)$ are aspinorial.

For $l=2n$, the group $\SO(2n)$ is of type $D_n$. In this case from \cite[Chapter $5$, Proposition $6.4$]{BrokerDieck}, we have
 
$$\delta=\sum_{i=1}^{n}(n-i)e_i\in X^{*}(T).$$
Therefore $\mathrm{Ad}\mid_{\mathrm{SO}(2n)}$ is spinorial. Following Theorem \ref{spinsemi} it remains to verify whether $\mathrm{Ad}\mid_{C_2}$ is spinorial.
For that we calculate 
$$\chi_{\mathrm{Ad}}(g_0)=\dfrac{\prod_{\alpha\in {R^{\tau}}^{+}}\langle \alpha^{\vee}, \lambda+\rho^{\tau}\rangle}{\prod_{\alpha\in {R^{\tau}}^{+}}\langle \alpha^{\vee}, \rho^{\tau}\rangle}.$$
From  Table \ref{table} we obtain $R^{\tau}$ is of type $B_{n-1}$. Therefore one calculates
\begin{align*}
\rho^{\tau}&=\frac{1}{2}\sum_{\alpha\in R^{\tau}}\frac{2\alpha}{(\alpha,\alpha)}\\
&=\frac{1}{2}\sum_{i<j}\{(e_i+e_j)+(e_i-e_j)\}+\frac{1}{2}\sum_{j=1}^{n-1}2e_j\\
&=\sum_{i=1}^{n-1}(n-i)e_i.
\end{align*}
From the given root system for the groups of type $D_n$ in \cite[Chapter $5$, Proposition $6.4$]{BrokerDieck}, we obtain the highest weight as $\lambda= e_1+e_2$. Therefore $$\lambda+\rho^{\tau}=ne_1+(n-1)e_2+\sum_{i=3}^{n-1}(n-i)e_i.$$ 
One calculates
\begin{align*}
\chi_{\mathrm{Ad}}(g_0)&=\dfrac{\prod_{\alpha\in {R^{\tau}}^{+}}\langle \alpha^{\vee}, \lambda+\rho^{\tau}\rangle}{\prod_{\alpha\in {R^{\tau}}^{+}}\langle \alpha^{\vee}, \rho^{\tau}\rangle}\\
&=\dfrac{\prod_{\alpha\in {R^{\tau}}^{+}}\langle \frac{2\alpha}{(\alpha,\alpha)}, \lambda+\rho^{\tau}\rangle}{\prod_{\alpha\in {R^{\tau}}^{+}}\langle \frac{2\alpha}{(\alpha,\alpha)}, \rho^{\tau}\rangle}\\
&=\dfrac{\prod_{\alpha\in {R^{\tau}}^{+}}\langle \alpha, \lambda+\rho^{\tau}\rangle}{\prod_{\alpha\in {R^{\tau}}^{+}}\langle \alpha, \rho^{\tau}\rangle}\\
\end{align*}
Note that $\lambda+\rho^{\tau}$ and $\rho^{\tau}$ differ only by first two terms. Therefore the terms in both the numerator and denominator containing the elements $e_1$ and $e_2$ will survive. The positive roots for groups of type $B_{n-1}$ are $e_i\pm e_j$ for $i<j$, and $e_j$ for $1\leq j\leq n-1$. Consider the set 
$$S=\{e_1\pm e_j\mid j>1\}\cup \{e_2\pm e_j\mid j>2\}\cup \{e_1,e_2\}.$$
We have
\begin{align*}
\chi_{\mathrm{Ad}}(g_0) &=\dfrac{\prod_{\alpha\in S}\langle \alpha, \lambda+\rho^{\tau}\rangle}{\prod_{\alpha\in S}\langle \alpha, \rho^{\tau}\rangle}\\
&=\dfrac{(2n-4)!/2\cdot (2n-3)!\cdot (2n-1)}{(2n-4)!\cdot (2n-5)!\cdot (2n-3)}\\
&=2n^2-5n+2.
\end{align*}
Following Proposition \ref{eigendim} we obtain the multiplicity of $-1$ as an eigenvalue of $\mathrm{Ad}(g_0)$ as 
\begin{align*}
\frac{1}{2}\cdot(\dim(\mathrm{Ad})-\chi_{\mathrm{Ad}}(g_0))&=\frac{1}{2}\cdot\left(\frac{(2n)^2-2n}{2}-(2n^2-5n+2)\right)\\
&=2n-1.
\end{align*}
Since $2n-1$ is odd, from Lemma \ref{lem1} we conclude that $\mathrm{Ad}\mid_{C_2}$ is spinorial if and only if $2n-1\equiv 3\pmod 4$. Equivalently we require $2n\equiv 0\pmod 4$.   
\end{proof}


\begin{remark}
In general the Adjoint representation of $G=G^0\rtimes C_2$ is of type II. This is because $\mathrm{Ad} \mid_{G^0}$ remains irreducible.
\end{remark}

\section{Stiefel-Whitney Classes for Representations of Orthogonal Groups}\label{sw}
For a brief introduction on Stifel-Whitney classes of representations of Lie groups we refer the reader to \cite[Section $2.6$, page no. $50$]{benson}.
We first calculate the second Stiefel-Whitney class for a representation of $C_2$. 
\begin{lemma}\label{lem2}
Let $(\pi,V)$ be a representation of $C_2$ and  $$\pi=\underbrace{\sgn\oplus\sgn\oplus\cdots\oplus\sgn}_{m\,\text{times}}\oplus\mathbb{1}\oplus\cdots\oplus\mathbb{1}$$
then 
$$w_2(\pi)=\frac{m(m-1)}{2}\cdot w_1(\sgn)\cup w_1(\sgn).$$
\end{lemma}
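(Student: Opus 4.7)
The plan is to compute the total Stiefel-Whitney class of $\pi$ by the Whitney product formula, and then read off the degree-two part.

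First, I would set $\alpha := w_1(\sgn) \in H^1(BC_2;\mathbb{F}_2)$. Since $\sgn$ is one-dimensional, all higher Stiefel-Whitney classes of $\sgn$ vanish, so the total class is simply
\[
w(\sgn) \;=\; 1 + \alpha.
\]
For the trivial one-dimensional representation $\mathbb{1}$, all positive-degree Stiefel-Whitney classes vanish, so $w(\mathbb{1}) = 1$. This is the basic input.

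Next I would apply the Whitney sum (product) formula $w(V_1 \oplus V_2) = w(V_1) \cdot w(V_2)$ to the given decomposition
\[
\pi \;=\; \sgn^{\oplus m} \oplus \mathbb{1}^{\oplus (\dim V - m)},
\]
obtaining
\[
w(\pi) \;=\; (1+\alpha)^m \cdot 1^{\dim V - m} \;=\; (1+\alpha)^m.
\]
Expanding mod $2$ by the binomial theorem and collecting the degree-two term gives
\[
w_2(\pi) \;=\; \binom{m}{2}\,\alpha \cup \alpha \;=\; \frac{m(m-1)}{2}\, w_1(\sgn) \cup w_1(\sgn),
\]
where $\binom{m}{2} = \frac{m(m-1)}{2}$ is interpreted modulo $2$, which is legitimate since exactly one of $m$ and $m-1$ is even, so the fraction is an integer and only its parity matters in $\mathbb{F}_2$-cohomology.

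There is no real obstacle here; the argument is a direct application of the Whitney product formula, and the only subtlety worth flagging is the interpretation of the rational-looking coefficient $m(m-1)/2$ as an element of $\mathbb{F}_2$. Everything else (vanishing of higher Stiefel-Whitney classes of one-dimensional bundles, and the multiplicativity of $w$ under direct sums) is standard and requires no computation specific to $C_2$ beyond the identification of $H^*(BC_2;\mathbb{F}_2) = \mathbb{F}_2[\alpha]$.
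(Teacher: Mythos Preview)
Your proof is correct and follows exactly the same approach as the paper: compute the total Stiefel--Whitney class via the Whitney product formula as $w(\pi) = (1 + w_1(\sgn))^m$, then read off $w_2(\pi) = \binom{m}{2}\, w_1(\sgn)\cup w_1(\sgn)$. Your version is somewhat more detailed in justifying the inputs and the interpretation of the coefficient modulo~$2$, but there is no difference in method.
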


\begin{proof}
We have
$$w(\pi)=(1+w_1(\sgn))^m.$$
Therefore $w_2(\pi)=\binom{m}{2}\cdot w_1(\sgn)\cup w_1(\sgn)$.
\end{proof}
We write $BG$ to denote a classifying space of $G$. For $G=\Or(n)$ we have $BG=G_n$, where $G_n$ denotes the infinite Grassmannian. Let $\gamma^n$ denote the real $n$-plane vector bundle over $G_n$ and $\phi_n$ denote the standard representation of $\Or(n)$ on $\mathbb{R}^n$. In fact the vector bundle $\gamma^n$ is isomophic to the vector bundle  associated to $\phi_n$ over $G_n$. From \cite[Theorem $B.7$, page $381$]{spingeo} we obtain
\begin{equation}\label{cron}
H^{*}(\Or(n),\Z/2\Z)=\Z/2\Z[w_1,\ldots,w_n],
\end{equation}
where $w_i$ denotes the $i$-th Stiefel-Whitney class of the vector bundle $\gamma^n$ over $G_n$. In other words $w_i=w_i(\gamma^n)=w_i(\phi_n)$.
From the same reference we obtain 
$H^{*}(\SO(n),\Z/2\Z)=\Z/2\Z[w_2',\ldots,w_n']$, where $$w_j'\in H^j(\SO(n),\Z/2\Z)\quad \text{and}\quad w_j'=w_j(i^{*}(\gamma^n)).$$
Here $i:\mathrm{B}\SO(n)\to \mathrm{B}\Or(n)$ denotes the induced map from the inclusion of $\SO(n)$ into $\Or(n)$.

Let `$\det$' denote the determinant of $\phi_n$. Note that $w_1(\phi_n)=w_1(\det)$ is the only non-zero element of $H^1(G,\Z/2\Z)$. We write $e_{\mathrm{cup}}=w_1(\det)\cup w_1(\det)$.
From Equation \eqref{cron} it follows that
$$H^2(\Or(n),\Z/2\Z)=\langle w_2(\gamma^n),e_{\mathrm{cup}}\rangle\cong\Z/2\Z\oplus \Z/2\Z.$$ 
For any representation $\pi$ of $\Or(n)$ we have
$$w_2(\pi)=aw_2(\gamma^n)+be_{\mathrm{cup}},$$
where $a,b\in \Z/2\Z$. For the subgroups $C_2$ and $\SO(n)$ of $\Or(n)$ we obtain the restriction maps $i_1^{*}:H^2(\Or(n),\Z/2\Z)\to H^2(\SO(n),\Z/2\Z)$ and $i_2^{*}:H^2(\Or(n),\Z/2\Z)\to H^2(C_2,\Z/2\Z)$.

\begin{lemma}\label{bij}
	The map 
	$$i^*: H^2(\Or(n),\Z/2\Z)\to H^2(\SO(n),\Z/2\Z)\oplus  H^2(C_2,\Z/2\Z),$$
	given by $i^*(\alpha)=i_1^*(\alpha)\oplus i_2^*(\alpha)$, for $\alpha\in H^2(\Or(n),\Z/2\Z)$, is an isomorphism.
\end{lemma}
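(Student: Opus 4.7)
The plan is to reduce the statement to a linear-algebra check: both source and target are two-dimensional $\Z/2\Z$-vector spaces, so it suffices to verify that $i^{*}$ sends the basis $\{w_{2}(\gamma^{n}),e_{\mathrm{cup}}\}$ of $H^{2}(\Or(n),\Z/2\Z)$ to a basis of the right-hand side.

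First I would pin down the target. From the presentation $H^{*}(\SO(n),\Z/2\Z)=\Z/2\Z[w_{2}',\ldots,w_{n}']$ recalled just before the lemma, the only monomial of degree two is $w_{2}'$ itself, so $H^{2}(\SO(n),\Z/2\Z)=\Z/2\Z\cdot w_{2}'$. Also $H^{2}(C_{2},\Z/2\Z)=\Z/2\Z\cdot t^{2}$, where $t\in H^{1}(C_{2},\Z/2\Z)$ is the nonzero element. Thus the right-hand side has exactly four elements, and it is enough to prove $i^{*}$ is injective.

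Next I would evaluate $i^{*}$ on each generator. Since $w_{2}'$ is by construction the second Stiefel-Whitney class of the restricted bundle $i_{1}^{*}\gamma^{n}$, we have $i_{1}^{*}(w_{2}(\gamma^{n}))=w_{2}'\neq 0$. The determinant character $\det:\Or(n)\to\{\pm 1\}$ is trivial on $\SO(n)$, so $i_{1}^{*}(w_{1}(\det))=0$ and hence $i_{1}^{*}(e_{\mathrm{cup}})=0$. On the $C_{2}$ side, the explicit $g_{0}$ in \eqref{g0} satisfies $\det(g_{0})=-1$ in both parities (directly when $g_{0}=-I_{2n+1}$, and via the $2\times 2$ swap block in the even case), so $w_{1}(\det)\mid_{C_{2}}=t$ and therefore $i_{2}^{*}(e_{\mathrm{cup}})=t^{2}\neq 0$.

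Organising these values into the matrix of $i^{*}$ with respect to the ordered bases $\{w_{2}(\gamma^{n}),e_{\mathrm{cup}}\}$ and $\{w_{2}',t^{2}\}$ produces $\bigl(\begin{smallmatrix}1 & 0\\ \ast & 1\end{smallmatrix}\bigr)$, which is invertible over $\Z/2\Z$ regardless of the unknown entry $i_{2}^{*}(w_{2}(\gamma^{n}))=\ast$. Hence $i^{*}$ is an isomorphism. The main conceptual obstacle is just the pair of auxiliary observations $\det(g_{0})=-1$ in the even case and the absence of additional degree-two classes in $H^{*}(\SO(n),\Z/2\Z)$; once those are in hand the result collapses to triangular linear algebra over $\Z/2\Z$ and no computation of $i_{2}^{*}(w_{2}(\gamma^{n}))$ is needed.
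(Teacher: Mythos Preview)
Your proof is correct and follows essentially the same linear-algebra reduction as the paper: both sides are two-dimensional over $\Z/2\Z$, so one exhibits two elements with linearly independent images. In fact you and the paper use the same two elements, since the paper's $w_2(\psi)$ with $\psi=\det\oplus\det$ equals $w_1(\det)^2=e_{\mathrm{cup}}$ by the Whitney product formula, and $w_2(\phi_n)=w_2(\gamma^n)$.

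The difference is in how the nonvanishing is verified. For $i_1^*(w_2(\gamma^n))$ the paper argues indirectly that $\phi_n\mid_{\SO(n)}$ is aspinorial (a section of $\Spin(n)\to\SO(n)$ would split the cover), whereas you simply invoke the definition $w_2'=w_2(i_1^*\gamma^n)$ together with the fact that $w_2'$ is a polynomial generator of $H^*(\SO(n),\Z/2\Z)$. For $e_{\mathrm{cup}}$ the paper again passes through spinoriality of $\psi\mid_{C_2}$, while you compute $\det(g_0)=-1$ directly. The paper also carries out a case analysis on $n\bmod 4$ to determine $i_2^*(w_2(\phi_n))$; your triangular-matrix observation shows this computation is unnecessary for the lemma. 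Your route is therefore slightly more economical, though the underlying strategy is the same.
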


\begin{proof}
Since $i^{*}$ is a linear map between $2$-dimensional $\Z/2\Z$ vector spaces, it suffices to show that its rank is $2$. From \cite[page no. $328$]{guna} we obtain that $w_2+w_1\cup w_1\in H^2(\Or(n),\Z/2\Z)$ corresponds to the group extension $\Pin(n)$ of $\Or(n)$, where $w_i$ denotes the Stiefel-Whitney classes of $\gamma^n$ over $G_n$. The restriction map $\phi_n\mid_{\SO(n)}:\SO(n)\to \SO(n)$ is in fact the identity map on ${\SO(n)}$. If there exists a lift $l:\SO(n)\to \Spin(n)$, then it becomes a section of the covering map $\rho:\Spin(n)\to \SO(n)$, as $\rho\circ l$ is the identity map on ${\SO(n)}$.
This violates the fact that $\Spin(n)$ is a non-trivial double cover of $\SO(n)$. Therefore $\phi_n\mid_{\SO(n)}$ is aspinorial and hence $i_1^*(w_2(\phi_n))= w_2(\phi_n \mid_{\SO(n)}) \neq 0$.

Note that $H^*(C_2, \Z/2\Z) = \Z/2\Z [x]$.
Also observe that $\phi_n\mid_{C_2}$ is the identity map on ${C_2}$, 
 where
\[
C_2=
\begin{cases}
\{\pm I\},\,\,\text{for $n$ is odd},\\
\langle g_0 \rangle,\,\, \text{for $n$ is even}.
\end{cases}
\]
In any case we have
$$w_1(\phi_n\mid_{C_2})=\det\circ \phi_n\mid_{C_2}=x .$$
The representation $\phi_n\mid_{C_2}$ is spinorial if and only if $n\equiv 3 \pmod 4$. 
Thus in this case $i_2^*(w_2(\phi_n\mid_{C_2})) + x^2 = 0$ which implies $i_2^*(w_2) = x^2$, and $0$ otherwise.
Therefore we obtain
		\[
		i_2^{*}(w_2(\phi_n))=
		\begin{cases}
		1,\quad \text{when $n\equiv 3\pmod 4$}\\
		0, \quad \text{otherwise}.
		\end{cases}
		\]
Therefore $i^{*}(w_2(\phi_n))=(1,1)$ or $(1,0)$.
		 
Consider the representation $\psi =\det\oplus \det$. As $\det\mid_{\SO(n)}= 1$, we obtain $i_1^*(w_2(\psi))=0$. Note that $\psi(g_0)= \begin{pmatrix}
			-1 & 0\\
             0 & -1
		 \end{pmatrix}.$ So $w_1(\psi\mid_{C_2})=\det(\psi\mid_{C_2})=1$ and $\psi\mid_{C_2}$ is aspinorial. This gives $i^*(w_2(\psi))=(0,1)$. Thus $i^*$ has rank $2$, as required.


%
\end{proof}

\begin{theorem}\label{thm1}
	
Let $\pi_{\lambda}\otimes \rho$ be an irreducible representation of $\Or(2n+1)$, where $\rho$ is an irreducible representation of $C_2$ and $\pi_{\lambda} $ be the irreducible representation of $\SO(2n+1)$ with highest weight $\lambda$. Then 
$$w_2((\pi_{\lambda}\otimes \rho)_0)=\dfrac{(2n-1)}{n(2n+1)}\dim V^{\lambda}\cdot \chi_{\lambda}(C)w_2(\gamma^n)+\dfrac{m(m-1)}{2}e_{\mathrm{cup}},$$
where 
\[
m=
\begin{cases}
\dim \pi_{\lambda},\,\,\text{for}\,\,\rho=\sgn,\\
0,\,\,\text{otherwise}.
\end{cases}
\]
\end{theorem}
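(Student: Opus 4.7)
The plan is to apply Lemma \ref{bij}, which provides the isomorphism
$$i^* : H^2(\Or(2n+1), \Z/2\Z) \;\xrightarrow{\sim}\; H^2(\SO(2n+1), \Z/2\Z) \oplus H^2(C_2, \Z/2\Z).$$
Since $H^2(\Or(2n+1), \Z/2\Z) = \Z/2\Z \cdot w_2(\gamma^{2n+1}) \oplus \Z/2\Z \cdot e_{\mathrm{cup}}$, I would write
$$w_2((\pi_\lambda \otimes \rho)_0) = a \cdot w_2(\gamma^{2n+1}) + b \cdot e_{\mathrm{cup}},$$
interpreting the integer coefficients in the statement as their residues modulo $2$, and determine $a,b$ by computing the two restrictions separately.

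First, I would pin down $a$ by restricting to $\SO(2n+1)$. Since $\rho$ is one-dimensional and $\Or(2n+1) = \SO(2n+1) \times C_2$, the restriction equals $\pi_\lambda$. Because $\SO(2n+1)$ is connected, $H^1(B\SO(2n+1), \Z/2\Z) = 0$ and $w_1(\pi_\lambda) = 0$, so spinoriality of $\pi_\lambda$ is equivalent to the vanishing of $w_2(\pi_\lambda)$ in the one-dimensional space $H^2(B\SO(2n+1), \Z/2\Z) = \Z/2\Z \cdot w_2'$. The criterion of Section \ref{JS} applied to type $B_n$, with $p(\underline{\nu}) = 2n-1$ and $\dim \mathfrak{g} = n(2n+1)$ (see \cite[Table $1$, \S $11$]{joshi}), then forces
$$w_2(\pi_\lambda) \;=\; \dfrac{(2n-1)}{n(2n+1)} \dim V^\lambda \cdot \chi_\lambda(C) \cdot w_2'.$$
Because $i_1^*(w_2(\gamma^{2n+1})) = w_2'$ and $i_1^*(e_{\mathrm{cup}}) = 0$ (the latter since $w_1|_{\SO} = 0$), this reads off $a$.

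Next, I would handle the restriction to $C_2 = \langle -I_{2n+1}\rangle$. The element $-I$ sits in the $C_2$ factor, so $(\pi_\lambda \otimes \rho)(-I) = \rho(-I) \cdot I_{V^\lambda}$. Consequently $(\pi_\lambda \otimes \rho)|_{C_2}$ is $m$ copies of $\sgn$ together with $\dim V^\lambda - m$ trivial summands, where $m = \dim V^\lambda$ when $\rho = \sgn$ and $m = 0$ otherwise. Lemma \ref{lem2} then yields
$$w_2\bigl((\pi_\lambda \otimes \rho)|_{C_2}\bigr) = \binom{m}{2} \cdot w_1(\sgn)^2.$$
Using the identity $w(\gamma^{2n+1}|_{C_2}) = (1 + w_1(\sgn))^{2n+1}$ to compute $i_2^*(w_2(\gamma^{2n+1}))$, together with $i_2^*(e_{\mathrm{cup}}) = w_1(\sgn)^2$, I would match coefficients in $H^2(BC_2, \Z/2\Z)$ against the expression $a \cdot i_2^*(w_2(\gamma^{2n+1})) + b \cdot w_1(\sgn)^2$ and solve for $b$.

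The main technical point is that $i_2^*$ does not annihilate $w_2(\gamma^{2n+1})$: direct expansion gives $i_2^*(w_2(\gamma^{2n+1})) \equiv n \cdot w_1(\sgn)^2 \pmod{2}$, so extracting $b \equiv \tfrac{m(m-1)}{2}$ requires checking that the cross-term $a \cdot n$ contributes trivially, which hinges on the divisibility properties of $\dim V^\lambda \cdot \chi_\lambda(C)$ recorded in \cite{joshi}. Once these parity relations are verified, Lemma \ref{bij} forces the claimed formula.
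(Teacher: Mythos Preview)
Your approach is exactly that of the paper: use Lemma~\ref{bij} to write $w_2(\pi_0)=a\,w_2(\gamma^{2n+1})+b\,e_{\mathrm{cup}}$, read off $a$ from the $\SO(2n+1)$-restriction via the Joshi--Spallone criterion, and read off $b$ from the $C_2$-restriction via Lemma~\ref{lem2}. The paper's proof is in fact briefer than yours; it simply asserts the values of $a$ and $b$ and does not discuss the interaction you raise.

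You are right to flag the cross-term: since $\phi_{2n+1}|_{C_2}=\sgn^{\oplus(2n+1)}$ one has $i_2^*(w_2(\gamma^{2n+1}))=\binom{2n+1}{2}x^2\equiv n\,x^2\pmod 2$, so $i_2^*(w_2(\pi_0))=(an+b)x^2$, not $b\,x^2$. Your proposed resolution, however, does not go through. The claim that ``divisibility properties from \cite{joshi}'' force $an\equiv 0$ is unsubstantiated, and it is false already for the standard representation $\phi_{2n+1}=\pi_{e_1}\otimes\sgn$: there $a=1$ (the identity $\SO(2n+1)\to\SO(2n+1)$ does not lift to $\Spin$), $m=2n+1$, and $\binom{m}{2}=n(2n+1)\equiv n$. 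The formula you are trying to prove would give $w_2(\phi_{2n+1})=w_2(\gamma^{2n+1})+n\cdot e_{\mathrm{cup}}$, whereas by definition $w_2(\phi_{2n+1})=w_2(\gamma^{2n+1})$; these disagree whenever $n$ is odd. So the step ``once these parity relations are verified'' cannot be completed as stated, and the coefficient of $e_{\mathrm{cup}}$ should in fact be $\binom{m}{2}+an$ rather than $\binom{m}{2}$. The paper's own proof glosses over precisely this point.
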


\begin{proof}
From Lemma \ref{bij} we obtain 
$$w_2(\pi_0)=aw_2(\gamma^n)+be_{\mathrm{cup}}.$$
We choose $a=\dfrac{(2n-1)}{n(2n+1)}\dim V^{\lambda}\cdot\chi_{\lambda}(C)$ by \cite[Remark $5$]{joshi} and $b=\dfrac{m(m-1)}{2}$. Note that $\pi_{\lambda}\otimes \rho\mid _{\SO_{2n+1}}$ is spinorial if and only if $a\equiv0\pmod2$. We use Lemma \ref{lem2}  for $b$ . Hence the result follows.
\end{proof}

\begin{theorem}\label{sw2}
Let $(\pi,W)$ be an irreducible representation of $\Or(2n)$, where $n\geq 4$. Let $R^{+}$ be the root system $C_{n-1}$. Then we have

$$w_2(\pi_0)=
\dfrac{2 \cdot (n-1)}{n\cdot(2n-1)}\cdot\dim V^{\lambda}\left(\chi_{\lambda}(C)+ \chi_{g_0\cdot\lambda}(C)\right)w_2(\gamma^n)
+\dfrac{m(m-1)}{2}e_{\mathrm{cup}},$$
when $\pi = \rho^{\lambda}$ is irreducible.
For $\pi=\pi^{\lambda, \pm}$, we have
$$
w_2(\pi_0)=\dfrac{2 \cdot (n-1)}{n\cdot(2n-1)}\dim V^{\lambda}\cdot \chi_{\lambda}(C)w_2(\gamma^n)+\dfrac{m(m-1)}{2}e_{\mathrm{cup}}.$$
 Here $m$ is as mentioned in Proposition \ref{eigendim} and $\chi_{ \pi^{\lambda , \pm}}(g_0)$ is as mentioned in Theorem \ref{chi}.
\end{theorem}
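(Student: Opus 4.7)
The plan is to use the isomorphism of Lemma \ref{bij} to write
$$w_2(\pi_0) = a\, w_2(\gamma^n) + b\, e_{\mathrm{cup}}, \qquad a,b \in \Z/2\Z,$$
and to pin down $a$ and $b$ by computing $i_1^*(w_2(\pi_0))$ and $i_2^*(w_2(\pi_0))$ separately.

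\textbf{Coefficient $b$ via restriction to $C_2$.} The standard representation of $\Or(2n)$ has dimension $2n$, which is always even, so the exceptional case in the proof of Lemma \ref{bij} does not occur and $i_2^*(w_2(\gamma^n)) = 0$. A direct check on the explicit element in Equation \eqref{g0} shows $\det(g_0) = -1$, so $\det|_{C_2}$ is the sign character and $i_2^*(e_{\mathrm{cup}}) = x^2$, where $x = w_1(\sgn)$ generates $H^*(C_2, \Z/2\Z)$. Hence $i_2^*(w_2(\pi_0)) = b\, x^2$. On the other hand, by Proposition \ref{eigendim} the representation $\pi|_{C_2}$ is a sum of $m$ copies of $\sgn$ with trivial copies, so Lemma \ref{lem2} gives
$$w_2(\pi|_{C_2}) = \binom{m}{2} x^2,$$
forcing $b \equiv m(m-1)/2 \pmod 2$.

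\textbf{Coefficient $a$ via restriction to $\SO(2n)$.} Because $\det|_{\SO(2n)}$ is trivial we have $i_1^*(e_{\mathrm{cup}}) = 0$, and $H^1(\SO(2n), \Z/2\Z) = 0$ implies $H^2(\SO(2n), \Z/2\Z)$ is one-dimensional, generated by $w_2'$. Thus $i_1^*(w_2(\pi_0)) = a\, w_2'$. Applying the Joshi--Spallone criterion of Section \ref{JS} to $\SO(2n)$ (type $D_n$, for which $p(\underline\nu)/\dim \mf g = 2(n-1)/(n(2n-1))$ by \cite[Remark $5$]{joshi}), and using additivity of $w_2$ across direct summands on a group with vanishing $H^1$, we compute in Type I, where $\pi|_{\SO(2n)} = \pi^\lambda \oplus \pi^{g_0 \cdot \lambda}$ and $\dim V^{g_0\cdot\lambda} = \dim V^\lambda$,
$$a \equiv \frac{2(n-1)}{n(2n-1)}\, \dim V^\lambda \bigl(\chi_\lambda(C) + \chi_{g_0\cdot\lambda}(C)\bigr) \pmod 2,$$
and in Type II, where $\pi^{\lambda,\pm}|_{\SO(2n)} = \pi^\lambda$,
$$a \equiv \frac{2(n-1)}{n(2n-1)}\, \dim V^\lambda\, \chi_\lambda(C) \pmod 2.$$

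\textbf{Assembly.} Combining the two computed restrictions via the isomorphism $i^* = i_1^* \oplus i_2^*$ of Lemma \ref{bij} produces the two asserted formulas for $w_2(\pi_0)$. The real content has been extracted in the earlier sections: the Joshi--Spallone formula supplies the coefficient of $w_2(\gamma^n)$, Proposition \ref{eigendim} identifies $m$, and Lemmas \ref{bij} and \ref{lem2} reduce the problem to two one-dimensional computations. The only mild subtleties are (i) verifying $\det(g_0) = -1$ so that $e_{\mathrm{cup}}$ does pull back nontrivially to $C_2$, and (ii) invoking $H^1(\SO(2n), \Z/2\Z) = 0$ so that $w_2$ is additive across the Type I summands; both are immediate. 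I do not anticipate a serious obstacle beyond bookkeeping.
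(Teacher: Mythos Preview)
Your proposal is correct and follows essentially the same route as the paper. The paper's own proof is a one-line reference (``by a similar argument as in Theorem \ref{thm1}''), and Theorem \ref{thm1}'s proof does exactly what you do: write $w_2(\pi_0)=a\,w_2(\gamma^n)+b\,e_{\mathrm{cup}}$ via Lemma \ref{bij}, read off $a$ from the Joshi--Spallone criterion (\cite[Remark 5]{joshi}) applied to $\pi|_{\SO(2n)}$, and read off $b$ from Lemma \ref{lem2} together with Proposition \ref{eigendim}. Your version is in fact more explicit than the paper's---you spell out why $i_2^*(w_2(\gamma^n))=0$ in the even case, why $i_2^*(e_{\mathrm{cup}})=x^2$, and why $w_2$ is additive on the Type I restriction---but these are exactly the details one would fill in when unpacking ``similar argument.''
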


\begin{proof}
The proof follows by a similar argument as in Theorem \ref{thm1}. 
\end{proof}

\begin{remark}
We have $H^2(\SO(l),\Z/2\Z)=\langle w_2'\rangle$. Therefore for an orthogonal representation $\pi^{\lambda}$ of $\SO(n)$ by \cite[Remark $5$]{joshi} we obtain 
\[
w_2((\pi^{\lambda})_0)=
\begin{cases}
\dfrac{(2n-1)}{n(2n+1)}\dim V^{\lambda}\cdot \chi_{\lambda}(C)w'_2 ,\,\,\text{when $l=2n+1$},\\
\dfrac{2 \cdot (n-1)}{n\cdot(2n-1)}\dim V^{\lambda}\cdot \chi_{\lambda}(C)w'_2,\,\,\text{when $l=2n$}.
\end{cases}
\]	
\end{remark}
\subsection{Calculation of $w_1$}

Let $(\pi,W)$ be an irreducible representation of $\Or(n)$. From \cite{DPDR} we obtain $w_1(\pi_0)=\det(\pi_0)$. 

\begin{theorem}
For $\Or(2n+1)$, if $\pi=\pi^{\lambda}\otimes \rho$ as in Corollary \ref{thm1}, then 	
\[
w_1(\pi_0)=
\begin{cases}
\dim V^{\lambda}\cdot w_1(\gamma^n),\,\,\text{when}\,\,\rho=\sgn\\
0,\,\,\text{when}\,\,\rho=\mathbb{1}.
\end{cases}
\]
For $\Or(2n)$ 
$$w_1(\pi_0)=m\cdot w_1(\gamma^n),$$
where $m$ is as in Proposition \ref{eigendim}.
\end{theorem}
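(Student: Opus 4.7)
The plan is to invoke the result cited from \cite{DPDR}, namely $w_1(\pi_0) = \det(\pi_0)$, viewed as an element of $H^1(G,\Z/2\Z) = \Hom(G,\Z/2\Z)$. Under this identification, the universal class $w_1(\gamma^n)$ corresponds precisely to the determinant character of the standard representation $\phi_n$, which is the nontrivial character of $\Or(n)/\SO(n) \cong C_2$. The problem therefore reduces to computing the sign character $\det(\pi_0)\colon G\to\{\pm 1\}$ explicitly in each of the two cases, and comparing it to $w_1(\gamma^n)$.

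For $\Or(2n+1)$, I would use the direct product decomposition $\Or(2n+1)=\SO(2n+1)\times\{\pm I\}$. On a pure tensor representation $\pi^{\lambda}\otimes\rho$, the determinant factors as
\[
\det\bigl((\pi^{\lambda}\otimes\rho)(g,\epsilon)\bigr)=\det\pi^{\lambda}(g)^{\dim\rho}\cdot\rho(\epsilon)^{\dim V^{\lambda}}.
\]
Since $\SO(2n+1)$ is connected and semisimple, it admits no nontrivial continuous character, so the first factor is $1$. If $\rho=\mathbb 1$ the second factor is also $1$ and hence $w_1(\pi_0)=0$. If $\rho=\sgn$, we obtain $\rho(\epsilon)^{\dim V^{\lambda}}$, which as a character of $\Or(2n+1)$ agrees with $(\dim V^{\lambda})\cdot\sgn$ in additive $\Z/2\Z$-notation; since $\sgn$ and $w_1(\gamma^n)$ are the same nontrivial element of $H^1(\Or(2n+1),\Z/2\Z)$, this is $(\dim V^{\lambda})\cdot w_1(\gamma^n)$.

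For $\Or(2n)=\SO(2n)\rtimes\langle g_0\rangle$, any character $\chi\colon\Or(2n)\to\{\pm 1\}$ is trivial on $\SO(2n)$ (again by semisimplicity of $\SO(2n)$ for $n\ge 2$, so its character group is killed by the projection to $\Or(2n)/\SO(2n)$), hence $\chi$ is determined by $\chi(g_0)$. Applying this to $\chi=\det\circ\pi$: the element $g_0$ is an involution, so $\pi(g_0)$ is diagonalizable with eigenvalues in $\{\pm 1\}$, and if $m$ denotes the multiplicity of $-1$ (as in Proposition \ref{eigendim}) then $\det\pi(g_0)=(-1)^m$. Thus $\det\pi_0$ is the $m$-th power of the nontrivial character of $\Or(2n)/\SO(2n)$, which translates to $w_1(\pi_0)=m\cdot w_1(\gamma^n)$ in $H^1(\Or(2n),\Z/2\Z)$.

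The only real substance is the identification of $w_1(\gamma^n)\in H^1(B\Or(n),\Z/2\Z)$ with the determinant homomorphism $\Or(n)\to\{\pm 1\}$ after pulling back along the classifying map; this is standard and is already implicit in the preceding lemmas (it underlies, e.g., the computation in Lemma \ref{bij}). Once this identification is in hand, both cases amount to the determinant computations above, with the $\Or(2n+1)$ case using the product structure and the $\Or(2n)$ case using the fact that $g_0$ acts as an involution.
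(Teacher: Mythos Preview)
Your proposal is correct and follows essentially the same approach as the paper: both reduce to computing $\det(\pi_0)$, observe that it is trivial on $\SO(n)$, and then evaluate on the generator of $C_2$. The paper's proof is more terse---it simply notes $\pi(\SO(n))\subset\SL(V)$ (which follows from connectedness of $\SO(n)$) and says it therefore suffices to compute $\det(\pi(g_0))$ (resp.\ $\det(\pi(-I))$)---while you justify the triviality on $\SO(n)$ via semisimplicity and spell out the remaining computations explicitly, but the strategy is the same.
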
	

\begin{proof}
Note that $\pi(\SO(n))\subset \mathrm{SL}(n)$. Therefore it is enough to determine $\det(\pi(g_0))$ (resp. $\det(\pi(-I))$) for $\Or(2n)$ (resp. $\Or(2n+1)$).
\end{proof}
\section{A Character formula }\label{chfor}

We begin this section with a detection result.

\begin{prop}\label{detection}
	Let $G_1$ and $G_2$ be two groups and $f:G_1 \to G_2$ be a morphism. If the map $f^*:H^i(G_2) \to H^i(G_1)$ is injective for $i\in\{1,2\}$, then any representation $\phi$ of $G_2$ is spinorial if and only if $\phi \circ f$ is spinorial. 
\end{prop}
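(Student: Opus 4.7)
The plan is to reduce everything to the characterization of spinoriality in terms of Stiefel--Whitney classes that was recorded earlier in the introduction: an orthogonal representation $\pi$ of a compact Lie group $G$ is spinorial if and only if $w_2(\pi) + w_1(\pi)\cup w_1(\pi) = 0$ in $H^2(BG,\Z/2\Z)$. Throughout, I interpret $H^i(G)$ as $H^i(BG,\Z/2\Z)$, which is the natural target for $w_1, w_2$.

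The easy direction is the ``only if.'' If $\phi$ is spinorial with lift $\widehat{\phi}: G_2 \to \Pin(V)$, then $\widehat{\phi}\circ f: G_1 \to \Pin(V)$ is a lift of $\phi\circ f$, so $\phi\circ f$ is spinorial. This direction needs no injectivity hypothesis.

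For the ``if'' direction, I would use naturality of Stiefel--Whitney classes. The induced map $Bf:BG_1 \to BG_2$ pulls back the universal bundle associated to $\phi$ to the one associated to $\phi\circ f$, so $w_i(\phi\circ f) = f^{*}(w_i(\phi))$ for $i = 1, 2$. Since $f^{*}$ is a graded ring homomorphism,
\begin{equation*}
w_2(\phi\circ f) + w_1(\phi\circ f)\cup w_1(\phi\circ f) \;=\; f^{*}\bigl(w_2(\phi) + w_1(\phi)\cup w_1(\phi)\bigr).
\end{equation*}
Assuming $\phi\circ f$ is spinorial, the left-hand side is zero. Injectivity of $f^{*}:H^2(G_2)\to H^2(G_1)$ then forces $w_2(\phi) + w_1(\phi)\cup w_1(\phi) = 0$, whence $\phi$ is spinorial.

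The argument is essentially formal once the spinoriality $\Longleftrightarrow$ $w_2+w_1^2=0$ criterion is in hand, so there is no serious obstacle; the main point worth flagging is that only injectivity of $f^{*}$ on $H^2$ is actually used (the hypothesis on $H^1$ is automatic in the applications the authors have in mind, and is perhaps included for symmetry or to guarantee the comparison of the first Stiefel--Whitney classes separately). If one wants to be careful, one should explicitly invoke naturality of $w_i$ under the classifying-space map $Bf$ and the fact that $f^{*}$ respects the cup product; both are standard and require no computation.
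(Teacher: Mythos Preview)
Your proof is correct and essentially identical to the paper's: both use naturality $w_i(\phi\circ f)=f^{*}(w_i(\phi))$, the identity $w_2(\phi\circ f)+w_1(\phi\circ f)^2=f^{*}(w_2(\phi)+w_1(\phi)^2)$, and injectivity of $f^{*}$ on $H^2$ together with the criterion that spinoriality is equivalent to $w_2+w_1^2=0$. The only cosmetic difference is that for the forward direction you compose the lift with $f$ directly, whereas the paper runs the Stiefel--Whitney argument in both directions; your observation that only $H^2$-injectivity is actually used is also accurate.
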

\begin{proof}
	We have $w_i(\phi \circ f)=f^*(w_i(\phi))$, for $i\in\{1,2\}$. This gives
	\begin{equation}\label{sw1}
	w_2(\phi\circ f) + w_1(\phi \circ f)\cup  w_1(\phi \circ f)=f^*(w_2(\phi) + w_1(\phi)\cup  w_1(\phi)).
	\end{equation}
	
	From \cite[page no. 238]{guna} we obtain $\phi$ is spinorial if and only if $w_2(\phi)) + w_1(\phi)\cup w_1(\phi)=0$. If $\phi$ is spinorial then following equation \eqref{sw1} we conclude that $\phi\circ f$ is spinorial. On the other hand if $\phi\circ f$ is spinorial then we obtain $f^*(w_2(\phi) + w_1(\phi)\cup  w_1(\phi))=0$. Since $f^{*}$ is injective we have the result.
\end{proof}

Consider the subgroup  
$$D_i=\diag\{\underbrace{\pm1, \pm1, \ldots,\pm1}_{i \,\,\text{times}},1,\ldots 1\ldots, 1\}$$
of $\Or(n)$ consisting of the diagonal matrices for $1\leq i\leq n$ . We write  
$$d_i=\diag(\underbrace{-1, -1, \ldots,-1}_{i \,\,\text{times}},1,\ldots 1\ldots, 1).$$
The next result enables us to detect spinorial representations of $\Or(n)$ from their character values.
\begin{theorem}
	A representation $\pi$ of $\Or(n)$ is spinorial if and only if both the following conditions hold:
	\begin{enumerate}
		\item $\chi_{ \pi}(1) -\chi_{ \pi}(d_1) \equiv 0 \text{ or } 6 \pmod 8$,
		\item $\chi_{ \pi}(1) -\chi_{ \pi}(d_2) \equiv 0 \text{ or } 6 \pmod 8$.
	\end{enumerate}
\end{theorem}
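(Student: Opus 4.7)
The plan is to apply Proposition \ref{detection} to the inclusion $f: H \hookrightarrow \Or(n)$, where $H = \langle d_1, d_2 \rangle$ is the Klein four-group of diagonal matrices $\diag(\pm 1, \pm 1, 1, \ldots, 1)$. Writing $H \cong C_2 \times C_2$, we have $H^*(H, \Z/2\Z) = \Z/2\Z[\alpha, \beta]$, where $\alpha, \beta \in H^1$ are dual to $d_1$ and $d_1 d_2$ respectively.

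First I verify that $f^*$ is injective in degrees $1$ and $2$. The standard representation restricts as $\phi_n|_H = \chi_1 \oplus \chi_2 \oplus \mathbb{1}^{n-2}$, where $\chi_1$ is the action on the first coordinate (so $w_1(\chi_1) = \alpha + \beta$) and $\chi_2$ is the action on the second coordinate (so $w_1(\chi_2) = \beta$). The Whitney sum formula then yields $w_1|_H = \alpha$, $w_2|_H = (\alpha+\beta)\beta = \alpha\beta + \beta^2$, and $e_{\mathrm{cup}}|_H = w_1^2|_H = \alpha^2$. Since $\alpha^2$ and $\alpha\beta + \beta^2$ are linearly independent in $H^2(H, \Z/2\Z)$, the map $f^*$ is injective on $H^2$; injectivity on $H^1$ follows from $\alpha \neq 0$. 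By Proposition \ref{detection}, the spinoriality of $\pi$ is equivalent to the spinoriality of $\pi|_H$.

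Next I compute $(w_2 + w_1^2)(\pi|_H)$. Decompose $\pi|_H$ into characters of $H$ with multiplicities $n_{(a,b)}$ indexed by $(a,b) \in (\Z/2\Z)^2$ according to the values $(\chi(d_1), \chi(d_2)) = ((-1)^a, (-1)^b)$. Setting $m_i = (\chi_\pi(1) - \chi_\pi(d_i))/2$ for the multiplicity of $-1$ as an eigenvalue of $\pi(d_i)$, one has $m_1 = n_{(1,0)} + n_{(1,1)}$ and $m_2 = n_{(0,1)} + n_{(1,1)}$. Expanding $w(\pi|_H) = \prod_i (1 + w_1(\chi_i))$ and using the identity $\binom{a+b}{2} \equiv \binom{a}{2} + \binom{b}{2} + ab \pmod 2$, a routine calculation gives
\begin{equation*}
(w_2 + w_1^2)(\pi|_H) = \left[\binom{m_1}{2} + m_1\right]\alpha^2 + \left[\binom{m_2}{2} + m_2\right]\beta^2 + c\,\alpha\beta
\end{equation*}
for some $c \in \Z/2\Z$. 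Because this class must lie in the image of $f^*$, namely $\langle \alpha^2,\, \alpha\beta + \beta^2\rangle$, the coefficients of $\alpha\beta$ and $\beta^2$ automatically coincide. Hence vanishing reduces to the two conditions $\binom{m_i}{2} + m_i \equiv 0 \pmod 2$ for $i=1,2$, and a case check shows each is equivalent to $m_i \equiv 0$ or $3 \pmod 4$. Substituting $m_i = (\chi_\pi(1) - \chi_\pi(d_i))/2$ then produces the stated congruences modulo $8$.

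The main obstacle is a dimensional mismatch: $H^2(H, \Z/2\Z)$ is three-dimensional over $\Z/2\Z$, so a priori the vanishing of $(w_2 + w_1^2)(\pi|_H)$ would impose three conditions. The essential observation is that the image of $f^*$ is only two-dimensional, forcing a linear relation among the three coefficients and ultimately reducing spinoriality to precisely two character congruences at $d_1$ and $d_2$.
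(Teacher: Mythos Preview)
Your argument is correct, with one slip: you write that $\alpha,\beta$ are dual to $d_1$ and $d_1d_2$, but all your subsequent formulas ($w_1(\chi_1)=\alpha+\beta$, $w_1|_H=\alpha$, the $m_i$ identities) are consistent only with $\alpha,\beta$ dual to $d_1$ and $d_2$. With that correction the computation goes through as you state.

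The overall strategy matches the paper's---both reduce to the Klein four-group $D_2=\langle d_1,d_2\rangle$ via Proposition~\ref{detection}---but the execution differs in two places. First, the paper establishes injectivity of $f^*$ by invoking Toda's identification $H^*(\Or(n))\cong H^*(D_n)^{S_n}$ and then projecting to $H^*(D_2)^{S_2}$, whereas you verify it by an explicit hand computation of the images of $w_1,w_2,e_{\mathrm{cup}}$; your route is more self-contained. Second, to pass from three potential conditions to two, the paper further restricts to the three cyclic subgroups $\langle d_1\rangle,\langle d_2\rangle,\langle d_1d_2\rangle$, applies Lemma~\ref{lem1} to each, and then collapses the $d_1$ and $d_1d_2$ conditions using their conjugacy in $\Or(n)$. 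You instead exploit the structural fact that $(w_2+w_1^2)(\pi|_H)$ must lie in the two-dimensional image of $f^*$, forcing the $\beta^2$ and $\alpha\beta$ coefficients to agree automatically. Both mechanisms encode the same symmetry, but yours stays entirely within the computation and avoids the extra restriction step.
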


\begin{proof}
	Let $W_i$ denote the permutation group of $D_i$. Note that $W_i=S_i$. From \cite[Theorem $2.2$, page $82$]{toda} we obtain an isomorphism $\alpha: H^{*}(\mathrm{O}(n))\to H^{*}(D_n)^{W_n}$. From \cite[Theorem $4.4$, page $69$]{adem} we have $H^{*}(D_i)=\ztwo[x_1,x_2,\ldots,x_i]$, the invariant subgroup $H^*(D_i)^{W_i}=\ztwo[x_1,x_2,\ldots,x_i]^{S_i}$ is the ring of symmetric polynomials in $i$ variables. We obtain an injection $f_i:H^i(D_n)^{W_n} \to H^i(D_i)^{W_i}$ by putting $x_j=0$ for $i+1\leq j\leq n$.  Therefore we obtain an injection 
	$$f_i\circ \alpha:H^i(\Or(n))\to H^i(D_i)^{W_i}.$$
	
	In particular $$ f_2 \circ \alpha : H^2(O(n)) \hookrightarrow H^2(D_2)$$ is an injection. From Proposition \ref{detection} we conclude that $\pi$ is spinorial if and only if $\pi\mid_{D_2}$ is spinorial. The non-trivial elements of the group $D_2$ are the involutions $d_1, d_2$ and $d_1d_2$. Therefore $D_2=\langle d_1,d_2\rangle\cong C_2\times C_2$. From Lemma \ref{lem1} we obtain that $\pi\mid_{C_2=\langle d_1\rangle}$ is spinorial if and only if the first condition mentioned in the theorem holds. Since $d_1$ and $d_1d_2$ are conjugate in $\Or(n)$, the same condition works for $\pi\mid_{C_2=\langle d_1d_2\rangle}$. Similarly the representation $\pi\mid_{C_2=\langle d_2\rangle}$ is spinorial if and only if the second condition holds.
\end{proof}
Using Theorem \ref{thm2} one obtains similar result for orthogonal representations of $\mathrm{GL_n}(\mathbb{R})$. We state it as the following corollary.
\begin{corollary}
	An orthogonal representation $\pi$ of $\mathrm{GL_n}(\mathbb{R})$ is spinorial if and only if both the following conditions hold:
	\begin{enumerate}
		\item $\chi_{ \pi}(1) -\chi_{ \pi}(d_1) \equiv 0 \text{ or } 6 \pmod 8$,
		\item $\chi_{ \pi}(1) -\chi_{ \pi}(d_2) \equiv 0 \text{ or } 6 \pmod 8$.	
	\end{enumerate}
\end{corollary}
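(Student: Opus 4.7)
The plan is to deduce this corollary directly from the two preceding results: Theorem \ref{thm2}, which reduces spinoriality questions for a real reductive Lie group with its identity component of finite index to the maximal compact subgroup, together with the character criterion just established for $\Or(n)$.

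First I would observe that $\GL_n(\mathbb{R})$ is a real reductive Lie group whose identity component $\GL_n^+(\mathbb{R})$ has index $2$, so the hypothesis of Theorem \ref{thm2} is satisfied. The standard Cartan/Iwasawa decomposition identifies $\Or(n)$ as a maximal compact subgroup of $\GL_n(\mathbb{R})$. Applying Theorem \ref{thm2} to an orthogonal representation $\pi$ of $\GL_n(\mathbb{R})$ with $K=\Or(n)$ gives the equivalence
\[
\pi \text{ is spinorial} \iff \pi\big|_{\Or(n)} \text{ is spinorial}.
\]

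Next, I would apply the previous theorem (the character formula for spinoriality of representations of $\Or(n)$) to $\pi|_{\Or(n)}$. It says exactly that $\pi|_{\Or(n)}$ is spinorial if and only if
\[
\chi_{\pi|_{\Or(n)}}(1)-\chi_{\pi|_{\Or(n)}}(d_1)\equiv 0\text{ or }6\pmod 8 \quad\text{and}\quad \chi_{\pi|_{\Or(n)}}(1)-\chi_{\pi|_{\Or(n)}}(d_2)\equiv 0\text{ or }6\pmod 8.
\]
Since $d_1,d_2\in \Or(n)\subset \GL_n(\mathbb{R})$, the character values $\chi_{\pi|_{\Or(n)}}(d_i)$ are equal to $\chi_\pi(d_i)$, and likewise $\chi_{\pi|_{\Or(n)}}(1)=\chi_\pi(1)$. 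Combining the two equivalences yields the corollary.

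I do not anticipate any serious obstacle; the only potentially non-trivial point is the justification that $\Or(n)$ is a maximal compact subgroup of $\GL_n(\mathbb{R})$ and that $\GL_n^+(\mathbb{R})$ has finite index in $\GL_n(\mathbb{R})$, both of which are classical. Everything else is a direct application of the two theorems already in the paper, combined with the tautology that restricting $\chi_\pi$ to a subgroup agrees with the character of the restriction on elements of that subgroup.
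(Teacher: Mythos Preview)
Your proof is correct and follows exactly the approach the paper intends: reduce to the maximal compact subgroup $\Or(n)\subset\GL_n(\mathbb{R})$ via Theorem \ref{thm2}, then invoke the character criterion just proved for $\Or(n)$. The paper's own justification is nothing more than the single sentence pointing to Theorem \ref{thm2}, so your write-up is if anything more detailed than the original.
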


\begin{remark}
One can use the injective map $f_2 \circ \alpha : H^2(O(n)) \hookrightarrow H^2(D_2)$ to calculate $w_2(\pi_0)$ for a representation $\pi$ of $\Or(n)$.
Consider the two $1$-dimensional representations of $C_2\times C_2$, namely
$\phi_2: (d_1,d_2)\to (-1,1)$ and $\phi_3: (d_1,d_2)\to (1,-1)$. We write $g_{d_i}=\frac{\chi_{\pi}(1)-\chi_{ \pi}(d_i)}{2}$ for $i\in \{1,2\}$ and $g_{d_1d_2}=\frac{\chi_{\pi}(1)-\chi_{ \pi}(d_1d_2)}{2}$. 
For a representation $(\pi,V)$ of $\Or(n)$ we have 
$$w_2(\pi_0)=\left[\frac{g_{d_2}}{2}\right]\alpha^2+\left[\frac{g_{d_2}}{2}\right]\beta^2+\left(\left[\frac{g_{d_1d_2}}{2}\right]+\left[\frac{g_{d_1}}{2}\right]+\left[\frac{g_{d_2}}{2}\right]\right)\alpha\beta,$$
where $\alpha=w_1(\phi_2)$, $\beta=w_1(\phi_3)$ and $[\cdot]$ denotes the greatest integer function. For details we refer to \cite{sujeet}.
\end{remark}

\section{Examples}\label{examples}

We work out the cases for the irreducible representations of $\Or(2)$ and $\Or(4)$ and $\Or(8)$. Note that
we cannot apply Theorem \ref{spin1} to determine the irreducible spinorial representations of the first two groups as $\SO(2)$ and $\SO(4)$ are not simple.

\subsection{Case of $\Or(2)$}

We have $\Or(2)=S^1\rtimes C_2$. The irreducible representations of $S^1$ are $\pi_n$ given by $\pi_n(z)=z^n$. We write $\rho_n=\mathrm{Ind}_{\SO(2)}^{\Or(2)}(\pi_n)$. From \cite[Section $5.1$]{repofO2} we obtain that for $n>0$, the irreducible representations are given by $\rho_n$. On the other hand $\rho_0=\mathbb{1}\oplus\det$.
We write $g_0$ to denote the element $\begin{pmatrix}
0 & 1\\
1 & 0
\end{pmatrix}$.
From Theorem \ref{chwendt} we obtain $\chi_{\rho_n}(g_0)=0$. Therefore $\rho_n(g_0)\sim\begin{pmatrix}
-1 & 0\\
0 & 1
\end{pmatrix}$. Hence by Lemma \ref{lem1}, the representations $\rho_n$ are aspinorial for $n>0$. The 
$\det$ representation is aspinorial by the same lemma.

\subsection{Case of $\Or(4)$} 

We know that $\Spin(4)\cong \SU_2\times \SU_2$, whose irreducible representations are of the form
$V_{a,b}=\mathrm{Sym}^aV_0\boxtimes \mathrm{Sym}^bV_0$, where $V_0$ denotes the standard representation of $\SU_2$. The representations which factor through $\SO(4)$ have the property $a\equiv b\pmod 2$. We write $g_0=\left(\sum_{i=1}^{2}e_{i,i}+e_{3,4}+e_{4,3}\right)
$.
From Proposition \ref{eigendim} we obtain 
$$m=\dim(V_{a,b})=(a+1)(b+1),$$
when $\rho^{(a,b)}$ is irreducible. So $\rho^{(a,b)}$ is spinorial if and only if both the following conditions hold:
\begin{enumerate}
\item 
$(a+1)(b+1)\equiv 0\,\,\text{or}\,\,3\pmod 4$
\item 
$\frac{1}{4}\left((a+1)\binom{b+2}{3}+(b+1)\binom{a+2}{3}\right)\equiv 0\pmod 2$.
\end{enumerate}
We have the second condition from \cite[Example $3$, page $21$]{joshi}.
Now we consider the case when $\rho^{a,b}$ is reducible. The root system of $SO(4)$ is $D_2\cong A_1\times A_1$. Therefore $D_2^{\tau}\cong A_1^{\tau}\times A_1^{\tau}\cong A_1\times A_1$. From \cite[Exercise $14.36$, page $210$]{fulton} we obtain the Killing form  $(.)$ for $\mf{so}(2n)$ as 
\begin{equation}\label{kill}
(\mu_1,\mu_2)=\frac{1}{2(2n-2)}\mu_1\cdot\mu_2,
\end{equation}
where $\mu_1\cdot \mu_2$ denotes the normal inner product. The positive roots are $S=\{e_1-e_2, e_1+e_2\}$. We calculate $$(e_1-e_2)^{\vee}=4 \cdot \frac{2(e_1-e_2)}{(e_1-e_2)\cdot(e_1-e_2)}=4(e_1-e_2).$$ 
Similarly we have $(e_1+e_2)^{\vee}=4(e_1+e_2)$.
The highest weight of $V_{a,b}$ is $(\frac{a+b}{2},\frac{b-a}{2})$. 
Now for calculating $\rho^{\tau}$ we take the normalized Killing form mentioned in section \ref{wendt}.
\begin{align*}
\rho^{\tau} &= \frac{1}{2}\sum_S \frac{2 \alpha}{(\alpha,\alpha)} \\
&= \frac{1}{2}(e_1+e_2+e_1-e_2)\\
&= e_1.
\end{align*}

From Theorem \ref{chi} we obtain 
$$\chi_{\pi^{\lambda,\pm}}(g_0)=\pm\dfrac{\prod_{\alpha\in S}\langle \alpha^{\vee}, \lambda+\rho^{\tau}\rangle}{\prod_{\alpha\in S}\langle \alpha^{\vee}, \rho^{\tau}\rangle}.$$
 Here $\langle\alpha^{\vee},x\rangle=(\frac{2\alpha}{(\alpha,\alpha)},x)$ where (,) is the killing form. We have $$\lambda+\rho^{\tau}=(\frac{a+b}{2}+1){e}_1+(\frac{b-a}{2}){e}_2$$.
So we calculate
\begin{align*}
\chi_{\pi^{\lambda,\pm}}(g_0)&=\pm \dfrac{( e_1-e_2,(\frac{a+b}{2}+1){e}_1+(\frac{b-a}{2}){e}_2 ) ( e_1+e_2,(\frac{a+b}{2}+1){e}_1+(\frac{b-a}{2}){e}_2 )}{( e_1-e_2,{e}_1) ( e_1+e_2,{e}_1)}\\
&=\pm \dfrac{(a+1)(b+1)}{1}\\
&=\pm (a+1)(b+1).
\end{align*}

From Proposition \ref{eigendim} we obtain 

\[
m=
\begin{cases}
0 \quad \text{when}\quad \pi=\pi^{\lambda,+}\\
(a+1)(b+1) \quad \text{when}\quad \pi=\pi^{\lambda,-}.
\end{cases}
\]

Therefore we conclude the representation $\pi^{\lambda,-}$ is spinorial if and only if both the following conditions hold:
\begin{enumerate}
\item 
$(a+1)(b+1)\equiv 0\,\,\text{or}\,\,3\pmod4$
\item 
$\frac{1}{4}\left((a+1)\binom{b+2}{3}+(b+1)\binom{a+2}{3}\right)\equiv 0\pmod 2$.
\end{enumerate} 

On the other hand the representation $\pi^{\lambda,+}$ is spinorial if and only if 
$\frac{1}{4}\left((a+1)\binom{b+2}{3}+(b+1)\binom{a+2}{3}\right)\equiv 0\pmod 2$.

\subsection{Case of $\Or(8)$} 
Consider the representations $V^{\lambda}$ of $\SO(8)$ with highest weight  $\lambda=(\lambda_1,\lambda_2,\lambda_3,0)$. In these cases $\mathrm{Ind}_{\SO(8)}^{\Or(8)}V^{\lambda}$ is reducible with irreducible components $\pi^{\lambda,\pm}$ such that $\pi^{\lambda,\pm}\mid_{\SO(8)}=V^{\lambda}$. From \cite[Table $1$, Section $11$]{joshi} we have the representation $V^{\lambda}$ is spinorial if and only if the integer 
\begin{equation}\label{SOspin}
\dfrac{2(n-1)}{n(2n-1)}\dim V^{\lambda}\cdot \chi_{\lambda}(C),
\end{equation}
is even. From \cite[Theorem $1.7$, page $242$]{BrokerDieck} we obtain the Weyl dimension formula
\begin{equation}\label{dimcal}
\dim V^{\lambda}=\prod_{\alpha\in \phi^{+}}\dfrac{(\lambda+\rho,\alpha)}{(\rho,\alpha)},
\end{equation}
where $(.)$ denotes the killing form as mentioned in \eqref{kill}.

As an example here we solve for the case  $$\lambda=(1,0,0,0).$$
The root system for $\SO(8)$ is $D_4$. The positive roots $\phi^{+}$ are $e_i-e_j$, $e_i+e_j$ for $1\leq i<j\leq 4$. The half sum of positive roots is $\rho=(3,2,1,0)$. , then using Equation \ref{dimcal} we calculate
\begin{align*}
\dim V^{\lambda}&=\dfrac{\prod_{1\leq i<j\leq 4}((4,2,1,0)\cdot (e_i-e_j))\prod_{1\leq i<j\leq 4}((4,2,1,0)\cdot (e_i+e_j))}{\prod_{1\leq i<j\leq 4}((3,2,1,0)\cdot (e_i-e_j))\prod_{1\leq i<j\leq 4}((3,2,1,0)\cdot (e_i+e_j))}\\
&=8.
\end{align*} 

From \cite[Section $2.3$]{joshi} we have $\chi_{\lambda}(C)=(\lambda,\lambda+2\rho)$. This gives 
\begin{align*}
\chi_{\lambda}(C)&=\frac{1}{12}\left((1,0,0,0)\cdot((1,0,0,0)+2(3,2,1,0)) \right)\\
&=7/12.
\end{align*} 
Putting $n=4$ in Equation \ref{SOspin} we obtain
\begin{equation}
\dfrac{2(n-1)}{n(2n-1)}\dim V^{\lambda}\cdot \chi_{\lambda}(C)=\frac{6}{28}\cdot 8\cdot\frac{7}{12}=1.
\end{equation}
Therefore $\pi^{(1,0,0,0),\pm}$ is aspinorial.

To find the Stiefel- Whitney class of this representation we need $m$ as in Proposition \ref{eigendim}. We take $\mu = \lambda \mid_{LS_0^*} = (1,0,0)$. The root system of $\SO(8)$ is $D_4$. From Table \ref{table} we have $D_4^{\tau}= B_3$. Let $S$ be the set of positive roots of $B_3$. Then $S = \{e_i \pm e_j \mid 1\leq i < j \leq 3\} \cup \{ e_i \mid 1 \leq i \leq 3\}$. Therefore we calculate 

\begin{align*}
\rho^{\tau} &= \frac{1}{2}\sum_S \frac{2 \alpha}{(\alpha,\alpha)} \\
    &= \sum_S \frac{\alpha}{(\alpha,\alpha)}\\
    &= (3,2,1).
\end{align*}

From Theorem \ref{chi} we obtain 
$$\chi_{\pi^{\lambda,\pm}}(g_0)=\pm\dfrac{\prod_{\alpha\in S}\langle \alpha^{\vee}, \lambda+\rho^{\tau}\rangle}{\prod_{\alpha\in S}\langle \alpha^{\vee}, \rho^{\tau}\rangle},$$
where $\alpha^{\vee}=\frac{2\alpha}{(\alpha,\alpha)}$. Putting the value of $\alpha^{\vee}$ we calculate
\begin{align*}
\chi_{\pi^{\lambda,\pm}}(g_0)&=\pm \dfrac{\prod_{\alpha\in S}( \alpha, \lambda+\rho^{\tau})}{\prod_{\alpha\in S}( \alpha, \rho^{\tau})}\\
&=\pm \dfrac{\prod_{\alpha\in S}( \alpha, (4,2,1))}{\prod_{\alpha\in S}( \alpha, (3,2,1))}\\
&=\pm 6.
\end{align*}
From Theorem \ref{eigendim} we have $m=7$ for $\pi^{\lambda,+}$ and $m=1$ for $\pi^{\lambda,-}$.
Putting these values in Theorem \ref{sw2} we obtain
\[
w_2(\pi)=
\begin{cases}
w_2(\gamma),\,\,\text{for}\,\,\pi=\pi^{\lambda,+}\\
w_2(\gamma)+e_{\mathrm{cup}},\,\,\text{for}\,\,\pi=\pi^{\lambda,-}.
\end{cases}
\]
For the notations $\gamma$ and $e_{\mathrm{cup}}$ see Section \ref{sw}.
\bibliographystyle{alpha}
\bibliography{mybib} 

\end{document}